\newtheorem{theorem}{Theorem}
\newtheorem{lemma}{Lemma}
\newtheorem{cor}{Corollary}
\newtheorem{prop}{Proposition}
\newtheorem{remark}{Remark}
\newtheorem{example}{Example}
\newtheorem{definition}{Definition}
\begin{document}

\title[]
{Kitaev models based on unitary quantum groupoids}

\author{Liang Chang}
\address{Department of Mathematics\\
    Texas A\&M University \\
    College Station, TX 77843-3368}
\email{liangchang@math.tamu.edu}

\begin{abstract}
We establish a generalization of Kitaev models based on unitary quantum groupoids. In particular, when inputting a Kitaev-Kong quantum groupoid $H_\mathcal{C}$, we show that the ground state manifold of the generalized model is canonically isomorphic to that of the Levin-Wen model based on a unitary fusion category $\mathcal{C}$. Therefore the generalized Kitaev models provide realizations of the target space of the Turaev-Viro TQFT based on $\mathcal{C}$.
\end{abstract}

\maketitle

\section{Introduction}
In \cite{Kit1}, Kitaev proposed an approach to quantum computation which is based on quantum many-body systems exhibiting topological order, i.e., systems that are effectively described by a Topological quantum field theory (TQFT). Given a finite group $G$, Kitaev constructed a Hilbert space on a triangulated surface and an exactly solvable Hamiltonian, whose ground state is a topological invariant of the surface. The best known of these models is the toric code, which is based on $\mathbb{Z}_2$. Recently, the semisimple Hopf algebra extension was achieved in \cite{BCMA}. Then in \cite{BK2} it was proved that the ground state manifold of the Kitaev model based on a $C^*$-Hopf algebra $H$ is canonical isomorphic to the ground state manifold of the Levin-Wen model based on the category $Rep(H)$.

In condensed matter physics, TQFTs are used to describe topological phases of matter. Turaev-Viro TQFTs (TV-TQFTs) are realized by Levin-Wen models which was introduced in \cite{LW} and understood rigorously in \cite{KK}. Given a unitary fusion category $\mathcal{C}$ and a trivalent lattice $\Gamma$ on a surface $\Sigma$, one can write down a local exactly solvable Hamiltonian and the space of ground states turns out to be canonical isomorphic to the target vector space $Z_{TV}(\Sigma)$ of the TV-TQFT based on $\mathcal{C}$ \cite{Kir}.

It is known that \cite{ENO} every unitary fusion category is the representation category of a $C^*$-quantum groupoid, which is not a Hopf algebra in general. Therefore the full dual Kitaev models to Levin-Wen Models should take $C^*$-quantum groupoids as inputs. It was conjectured \cite{BCKA} that the construction in  \cite{BCMA} should work for $C^*$-quantum groupoid. In this paper, we establish this construction for certain $C^*$-quantum groupoids and obtain a similar isomorphism between the ground state manifolds of generalized Kitaev models and Levin-Wen Models.

In \cite{KK}, a $C^*$-quantum groupoid $H_\mathcal{C}$ was defined from a unitary fusion category $\mathcal{C}$ to study the boundary excitations of Levin-Wen models. In this paper, Kitaev models are generalized based on $H_\mathcal{C}$. To this end, on a given lattice of a closed oriented surface, vertex operators $A_\Lambda(\mathbf{v})$'s and plaquette operators $B_\lambda(\mathbf{p})$ are defined using suitable cocommutative elements $\Lambda$ and $\lambda$ in $H_\mathcal{C}$ and $H_\mathcal{C}^*$ so that they commute with each other and
  $$\mathcal{H}^K=-\sum\limits_{\mathbf{v}}A_\Lambda(\mathbf{v})-\sum\limits_{\mathbf{p}}B_\lambda(\mathbf{p})$$
is a frustration-free Hamiltonian (see more details in section 5 and section 7).

The relationship between the work of Levin-Wen and Kitaev was discussed in \cite{BA} and \cite{KMR}. For mathematicians, \cite{BK2} provides a readable explanation in the case of semisimple Hopf algebras. In this case, one can find a 1-1 correspondence between ground states of these two models. For the $C^*$-quantum groupoids $H_\mathcal{C}$, we verify that such one-one correspondence still holds. More precisely, under certain assumption, given a unitary fusion category $\mathcal{C}$ and a lattice $\Gamma$ on a closed oriented surface $\Sigma$, the ground state space $\mathcal{G}^{K}(\Sigma,\Gamma)$ of the generalized Kitaev model based on $H_\mathcal{C}$ is canonically isomorphic to the ground state space $\mathcal{G}^{LW}(\Sigma,\Gamma)$ of Levin-Wen Models based on $\mathcal{C}$. As a consequence, $\mathcal{G}^{K}(\Sigma,\Gamma)$ is canonically isomorphic to the target space $Z_{TV}(\Sigma)$ of the TV-TQFT based on $\mathcal{C}$.

The contents of the paper are as follows. In section 2 and section 3, we recall the notions of unitary fusion category and the construction of Levin-Wen models. In section 4 and section 5, we recall the notion of $C^*$-quantum groupoid and construct the generalized Kitaev models based on $C^*$-quantum groupoids. In section 6, we set up the notion of Kitaev-Kong $C^*$-quantum groupoid $H_\mathcal{C}$ based on a unitary fusion category $\mathcal{C}$ and study its representation categories. Finally, in section 7, we input $H_\mathcal{C}$ to write down the Kitaev model and compares its ground states with those of LW models based on $\mathcal{C}$.\\

\noindent
\textbf{Acknowledgements}. The author is grateful to Zhenghan Wang for his advice and encouragement.

\section{Unitary fusion categories}

A fusion category is a semisimple abelian rigid tensor category with finitely many isomorphism classes of simple objects and finite dimensional morphism spaces and simple unit object (see \cite{BK} or \cite{ENO} for the complete axiomatic definition). In this section we recall the notion of unitary fusion category and establish our notations of certain $6j$-symbols.

From now on, $Irr(\mathcal{C})$ denotes the set of isomorphism classes of non-zero simple objects in $\mathcal{C}$. The set of decompositions $i\otimes j\cong \bigoplus_{k\in Irr(\mathcal{C})}N^k_{ij}k$ for all $i,j\in Irr(\mathcal{C})$ and some natural number $N^k_{ij}$, is called the fusion rule of the fusion category $\mathcal{C}$. We say a triple of simple objects $(i,j,k)$ is admissible if $N^k_{ij}\neq 0$. A fusion category is multiplicity-free if $N^k_{ij}\in \{0,1\}$ for any simple objects $(i,j,k)$, i.e., $dim(Hom(i\otimes j,k))=0~\text{or}~1$.

In the following, we employ graphical calculus that complies the conventions in \cite{BK}. In particular, diagrams are read bottom to top. 

If $a$ is a self-dual simple object in a pivotal fusion category, i.e.,
$a^*=a$, then $dim~Hom(a\otimes a, \textbf{1})=dim~Hom(a^*,
a)=dim~Hom(a, a)=1$. This implies that as vectors in $Hom(a\otimes
a, \text{1})$,
\[
\begin{tikzpicture}[scale=0.5]
  \begin{scope}[xshift=-2.3cm]
    \draw[line width=0.5mm] (1,0) arc(0:180:1);
    \node (a) at (1,-0.3) {$a$};
    \node (b) at (-1,-0.3) {$a$};
  \end{scope}
  \node (=) at (0,0.3) {$=\nu_a$};
  \begin{scope}[xshift=4cm]
    \draw[line width=0.5mm] (-1,0) arc(0:180:0.5);
    \draw[line width=0.5mm] (-2,0) arc(360:180:0.5 and 0.4);
    \draw[line width=0.5mm] (-3,0) arc(180:0:2 and 1.5);
    \node (a) at (1,-0.3) {$a$};
    \node (b) at (-1,-0.3) {$a$};
  \end{scope}
\end{tikzpicture}
\]
Such scalar $\nu_a$ is called the Frobenius-Schur indicator. It turns out that $\nu_a=\pm1$ \cite{Wangbook}.

For simplicity we will only consider multiplicity-free fusion categories whose simple objects are self-dual and  Frobenius-Schur indicators are trivial, i.e. $\nu_V=1$ for every simple object. This assumption allows removing arrows from graphs and not considering the $\pm$ sign from Frobenius-Schur indicators when wrapping the lines. 

The associativity of a fusion category can be represented by a family of numbers. They are the so called $6j$-symbols. For $a,b,c,d\in Irr(\mathcal{C})$, we choose bases for the vector spaces $Hom((a\otimes b)\otimes c,d)$ and $Hom(a\otimes(b\otimes c),d)$ which are presented by the trivalent graphs
\[
\begin{tikzpicture}[scale=0.5]    
  \begin{scope}[xshift=-8cm]
    \draw[line width=0.5mm] (0,0)--(-2,2);
    \draw[line width=0.5mm] (0,0)--(2,2);
    \draw[line width=0.5mm] (-1,1)--(0,2);
    \draw[line width=0.5mm] (0,0)--(0,-1);
    \node (a) at (-2,2.4) {$a$};
    \node (b) at (0,2.5) {$b$};
    \node (c) at (2,2.4) {$c$};
    \node (d) at (0,-1.4) {$d$};
    \node (m) at (-1,0.3) {$m$};
    \node (Hom) at (6.5,0.5) {$\in Hom((a\otimes b)\otimes c,d)$,};
  \end{scope}
  \begin{scope}[xshift=8cm]
    \draw[line width=0.5mm] (0,0)--(-2,2);
    \draw[line width=0.5mm] (0,0)--(2,2);
    \draw[line width=0.5mm] (1,1)--(0,2);
    \draw[line width=0.5mm] (0,0)--(0,-1);
    \node (a) at (-2,2.4) {$a$};
    \node (b) at (0,2.5) {$b$};
    \node (c) at (2,2.4) {$c$};
    \node (d) at (0,-1.4) {$d$};
    \node (n) at (1,0.3) {$n$};
    \node (Hom) at (6.5,0.5) {$\in Hom(a\otimes (b\otimes c),d)$};
  \end{scope}
\end{tikzpicture}
\]
where $m$ and $n$ run through all possible admissible simple objects. The $F$-matrices, whose entries are $6j$-symbols, present the isomorphism $F^{abc}_d:Hom((a\otimes b)\otimes c, d)\rightarrow Hom(a\otimes(b\otimes c),d)$.
\[
\begin{tikzpicture}[scale=0.5]    
  \begin{scope}[xshift=-4.5cm]
    \draw[line width=0.5mm] (0,0)--(-2,2);
    \draw[line width=0.5mm] (0,0)--(2,2);
    \draw[line width=0.5mm] (-1,1)--(0,2);
    \draw[line width=0.5mm] (0,0)--(0,-1);
    \node (a) at (-2,2.4) {$a$};
    \node (b) at (0,2.5) {$b$};
    \node (c) at (2,2.4) {$c$};
    \node (d) at (0,-1.4) {$d$};
    \node (m) at (-1,0.3) {$m$};
  \end{scope}
  \node (=) at (0,0.3) {$=\sum\limits_nF^{abc}_{d;nm}$};
  \begin{scope}[xshift=4.3cm]
    \draw[line width=0.5mm] (0,0)--(-2,2);
    \draw[line width=0.5mm] (0,0)--(2,2);
    \draw[line width=0.5mm] (1,1)--(0,2);
    \draw[line width=0.5mm] (0,0)--(0,-1);
    \node (a) at (-2,2.4) {$a$};
    \node (b) at (0,2.5) {$b$};
    \node (c) at (2,2.4) {$c$};
    \node (d) at (0,-1.4) {$d$};
    \node (n) at (1,0.3) {$n$};
  \end{scope}
\end{tikzpicture}
\]

The associativity of $\mathcal{C}$ can be translated into the Pentagon equations among $F^{abc}_{d;nm}$'s: for all $a, b,c,d,e,f,p,q,m\in Irr(\mathcal{C})$,
\begin{equation}\label{eqn:pentagon}
\sum\limits_nF^{bcd}_{q;pn}F^{and}_{f;qe}F^{abc}_{e;nm}=F^{abp}_{f;qm}F^{mcd}_{f;pe}
\end{equation}

\begin{definition}
  A unitary fusion category $\mathcal{C}$ is a fusion category over $\mathbb{C}$ equipped with a conjugation $Hom_{\mathcal{C}}(U,V)\rightarrow Hom_{\mathcal{C}}(V,U)$ denoted by $f\mapsto\bar{f}$ for all objects $U,V$ of $\mathcal{C}$ satisfying:\\
  (i) $\overline{\lambda f}=\bar{\lambda}\bar{f}$, for $\lambda\in \mathbb{C}$.\\
  (ii) $\overline{g\circ f}=\bar{f}\bar{g}$ for $f\in Hom_{\mathcal{C}}(U,V)$ and $g\in Hom_{\mathcal{C}}(V,W)$.\\
  (iii) $\overline{f\otimes g}=\bar{f}\otimes\bar{g}$. for $f\in Hom_{\mathcal{C}}(U,V)$ and $g\in Hom_{\mathcal{C}}(U',V')$.\\
  (iv) $\bar{\bar{f}}=f$.\\
  (v) $\bar{f}\circ f=0$ if and only if $f=0$.
\end{definition}

\begin{prop}
  \cite{Kit2} $F$-matrices of a unitary fusion category can be chosen to be unitary.
\end{prop}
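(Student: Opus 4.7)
The plan is to promote each Hom space landing in a simple object to a Hilbert space using the conjugation $f\mapsto\bar f$, observe that in a unitary fusion category the associator is a unitary isomorphism, and then read off the $F$-matrix as the matrix of an isometry between two finite-dimensional Hilbert spaces in orthonormal bases.

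First I would define, for any object $U$ and any simple object $V$ and any $f,g\in Hom(U,V)$, a scalar $\langle f,g\rangle\in\mathbb{C}$ by $f\circ\bar g=\langle f,g\rangle\, id_V$; this is well-defined because $End(V)=\mathbb{C}\cdot id_V$. Axioms (i)--(iii) give sesquilinearity, axiom (iv) gives conjugate-symmetry (via $\overline{f\circ\bar g}=f\circ\bar g$), and axiom (v) combined with the standard fact that $\bar f\circ f$ is a positive element of the $C^*$-algebra $End(U)$ upgrades non-degeneracy to positive-definiteness. Consequently each of $Hom((a\otimes b)\otimes c,d)$ and $Hom(a\otimes(b\otimes c),d)$, with $d$ simple, becomes a finite-dimensional Hilbert space.

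Next I would note that in a unitary fusion category the associator $\alpha_{abc}\colon (a\otimes b)\otimes c\to a\otimes(b\otimes c)$ is unitary, that is $\overline{\alpha_{abc}}=\alpha_{abc}^{-1}$, so that the map $F^{abc}_d\colon f\mapsto f\circ\alpha_{abc}^{-1}$ is an isometry:
$$\langle f\circ\alpha^{-1},\, g\circ\alpha^{-1}\rangle\, id_d \;=\; f\circ\alpha^{-1}\circ\overline{\alpha^{-1}}\circ\bar g \;=\; f\circ\alpha^{-1}\circ\alpha\circ\bar g \;=\; \langle f,g\rangle\, id_d.$$
I then choose orthonormal bases of the trivalent-graph form displayed in the excerpt by rescaling each one-dimensional vertex space $Hom(x\otimes y,z)$ to contain a unit vector and absorbing any diagrammatic normalization so that the full graphs, indexed by admissible labels $m$ (resp.\ $n$), have unit length. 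In such orthonormal bases the $F$-matrix is the matrix of an isometry between two finite-dimensional Hilbert spaces of equal dimension, hence is unitary, and its entries are by definition the $6j$-symbols $F^{abc}_{d;nm}$.

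The main obstacle I anticipate is the positive-definiteness step: axiom (v) alone only yields non-degeneracy of $\langle\cdot,\cdot\rangle$, and upgrading to $\langle f,f\rangle>0$ requires the $C^*$-categorical input that $\bar f\circ f$ is a positive element of $End(U)$, which is part of the full unitary/$C^*$ structure lying behind the stated axioms. A secondary, more routine point is organizing the normalization of trivalent-vertex bases consistently (where quantum dimensions typically appear) so that the two bases are genuinely orthonormal; once that is done, unitarity of the $F$-matrix is automatic from the isometry property.
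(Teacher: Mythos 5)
The paper offers no proof of this proposition: it is imported from \cite{Kit2}, and the surrounding text only records the consequence $F^{abc}_d(F^{bcd}_a)^\dag=I$ and the theta-normalization. Your argument is the standard proof of the quoted fact and is sound in outline: the pairing $f\circ\bar g=\langle f,g\rangle\,id_V$ on $Hom(U,V)$ for simple $V$ is well defined and sesquilinear, the trivalent-tree bases with distinct internal labels are automatically orthogonal (the cross terms factor through $Hom(m,m')=0$) with norms multiplicative over vertices, and precomposition with a unitary associator is an isometry, so in orthonormal gauges the change-of-basis matrix is unitary. Two points need tightening, and you have correctly located the first. Axiom (v) as literally stated only makes the Hermitian form anisotropic, hence definite on each Hom space, and positivity must come from the $C^*$ structure; the standard way to extract it from these axioms is to note that $End(U\oplus V)$ is a finite-dimensional $*$-algebra in which $x^*x=0$ forces $x=0$, hence a multimatrix $C^*$-algebra in which $x^*x\ge 0$, and positivity in the corner $End(V)\cong\mathbb{C}$ is positivity of the scalar. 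The second point is that Definition 1 of the paper never asserts that the associativity constraint is unitary, while your isometry computation uses $\overline{\alpha_{abc}}=\alpha_{abc}^{-1}$ essentially; this is part of the usual definition of a unitary fusion category (and is automatic up to equivalence), but from the axioms as printed it must be added as a hypothesis or derived. Neither point is an obstruction. One further remark: the gauge actually used later in the paper, fixed by $\theta(a,b,c)=\sqrt{d_ad_bd_c}$, is not your unit-norm gauge, but differs from it by a diagonal rescaling of the one-dimensional vertex spaces, so the unitarity you establish is preserved. (Also, the parenthetical justification of conjugate symmetry should read $\overline{f\circ\bar g}=g\circ\bar f$, which is what gives $\overline{\langle f,g\rangle}=\langle g,f\rangle$.)
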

Indeed, one can choose $F^{abc}_d$ such that $F^{abc}_d(F^{bcd}_a)^\dag=I$. Moreover, because of the physical application the theta symbols are normalized as
\[
\begin{tikzpicture}[scale=0.5]
  \node (theta) at (0,0) {$\theta(a,b,c):=$};
  \begin{scope}[xshift=4.5cm]
    \draw[line width=0.5mm] (2,0) arc (0:360:2cm and 1cm);
    \draw[line width=0.5mm] (-2,0)--(2,0);
    \node (a) at (0,1.4) {$a$};
    \node (b) at (0,0.4) {$b$};
    \node (c) at (0,-1.4) {$c$};
  \end{scope}
  \node (=) at (9,0) {$=\sqrt{d_ad_bd_c}$};
\end{tikzpicture}
\]
As a result, the identity homomorphism from $a\otimes b$ to itself can be decomposed as follows:
\begin{equation}\label{eqn:Fmove}
\begin{split}
\begin{tikzpicture}[scale=0.5]
  \begin{scope}[xshift=-4cm]
  \draw[line width=0.5mm] (-1,1)--(-1,-1);
  \draw[line width=0.5mm] (1,1)--(1,-1);
  \node (a) at (-1.3,1.3) {$$};
  \node (b) at (-1.3,-1.3) {$a$};
  \node (c) at (1.3,1.3) {$$};
  \node (d) at (1.3,-1.3) {$b$};
  \node (i) at (0,0.5) {$$};
  \end{scope}
  \node (=) at (0,-0.2) {$=\sum\limits_{n}\sqrt{\frac{d_n}{d_ad_b}}$};
  \begin{scope}[xshift=4cm]
  \draw[line width=0.5mm] (-1,1)--(0,0.7);
  \draw[line width=0.5mm] (1,1)--(0,0.7);
  \draw[line width=0.5mm] (-1,-1)--(0,-0.7);
  \draw[line width=0.5mm] (1,-1)--(0,-0.7);
  \draw[line width=0.5mm] (0,-0.7)--(0,0.7);
  \node (a) at (-1.3,1.3) {$a$};
  \node (b) at (-1.3,-1.3) {$a$};
  \node (c) at (1.3,1.3) {$b$};
  \node (d) at (1.3,-1.3) {$b$};
  \node (i) at (0.5,0) {$n$};
  \end{scope}
\end{tikzpicture}
\end{split}
\end{equation}

\section{Levin-Wen models based on unitary fusion categories}

Let $\mathcal{C}$ be a unitary fusion category. Given a trivalent lattice $\Gamma$ on an oriented closed surface $\Sigma$,
the Hilbert space for this model is
$$\mathcal{L}^{LW}=\bigotimes_{\text{edges}}\mathbb{C}^l$$
where $l$ is the rank of $\mathcal{C}$. It has a natural basis of all edge-labelings of $\Gamma$ by representatives of simple objects (called labels).
For simplicity, we assume $\mathcal{C}$ is multiplicity-free and self dual. The Hamiltonian will be written as
$$\mathcal{H}^{LW}=-\sum\limits_{\mathbf{v}}A^{LW}_{\mathbf{v}}-\sum\limits_{\mathbf{v}}B^{LW}_{\mathbf{p}}$$
It suffices to define these operators on each basis vector. Given an edge-labeling $e_l$ of $\Gamma$, define $A^{LW}_{\mathbf{v}}|e_l\rangle=|e_l\rangle$
if the three labels are admissible around $\mathbf{v}$, else $A^{LW}_{\mathbf{v}}|e_l\rangle=0$. Then the subspace $\mathcal{L}^{LW}_0=A^{LW}_{\mathbf{v}}(\mathcal{L}^{LW})$
is spanned by the edge-labelings of which any three labels around each vertex are admissible. The plaquette term will be expected to satisfy the requirement of zero total flux through each plaquette.
An explicit formula is given by
\[

  \]
After plugging this into the pentagon equation and applying the rotation invariance of $G^{abc}_{kmn}$'s, the lemma is proven.
\end{proof}

\section{$C^*$-Quantum groupoids}

\subsection{Basic definition}

In this section we review basic properties of finite dimensional $C^*$-quantum
groupoids. The concept of quantum groupoid (weak Hopf algebra) is weakening the
constraint on the unit and counit as in Hopf algebras. Readers wanting more details
should consult Nikshych and Vainerman's survey article \cite{NV}.

Throughout this paper we use Sweedler's notation for comultiplication,
writing $\Delta(b)=\sum_{(b)}b_{(1)}\otimes b_{(2)}$. For simplicity, we shall suppress
the summation symbol and write $\Delta(b)=b_{(1)}\otimes b_{(2)}$ when no confusion occurs.
Using coassocitivity the iterated coproduct $\Delta^2(b)=(id\otimes\Delta)\Delta(b)=(\Delta\otimes id)\Delta(b)=b_{(1)}\otimes(b_{(2)})_{(1)}\otimes(b_{(2)})_{(2)}$
can be simply witten as $b_{(1)}\otimes b_{(2)}\otimes b_{(3)}$.

\begin{definition}
A finite quantum groupoid $H$ is a finite dimensional vector space with the structures
of an associative algebra $(H,m,\eta)$ with multiplication $m:H\otimes H\rightarrow  H$
and unit $\eta\in H$ and a coassociative coalgebra $(H,\Delta,\varepsilon)$ with comultiplication
$\Delta:H\rightarrow H\otimes H$ and counit $\mathbb{C}\rightarrow H$ such that:\\
(1) The comultiplication is an algebra homomorphism ($\Delta(ab)=\Delta(a)\Delta(b)$) such that
$$(\Delta\otimes id)\Delta(\eta)=(\Delta(\eta)\otimes\eta)(\eta\otimes\Delta(\eta))=(\eta\otimes\Delta(\eta))(\Delta(\eta)\otimes\eta)$$
(2) The counit is a linear map satisfying the identity:
$$\varepsilon(abc)=\varepsilon(ab_{(1)})\varepsilon(b_{(2)}c)=\varepsilon(ab_{(2)})\varepsilon(b_{(1)}c),~\forall a,b,c\in H$$
(3) There is a linear map $S:H\rightarrow H$, called an antipode, such that, for $\forall h\in H$,
$$m(id\otimes S)\Delta(h)=(\varepsilon\otimes id)(\Delta(\eta)(h\otimes\eta)),~i.e.~h_{(1)}S(h_{(2)})=\varepsilon(\eta_{(1)}h)\eta_{(2)},$$
$$m(S\otimes id)\Delta(h)=(id\otimes\varepsilon)((\eta\otimes h)\Delta(\eta)),~i.e.~S(h_{(1)})h_{(2)}=\varepsilon(h\eta_{(2)})\eta_{(1)},$$
$$m(m\otimes id)(S\otimes id\otimes S)(\Delta\otimes id)\Delta(h)=S(h),~i.e.~S(h_{(1)})h_{(2)}S(h_{(3)})=S(h).$$
A quantum groupoid $H$ is said to be a $C^*$-quantum groupoid if it is a $C^*$-algebra
and $\Delta$ is a $*$-homomorphism (\cite{NV}). Namely, its $*$-structure $*:H\rightarrow H$ satisfies $\Delta(h^*)=\Delta(h)^*$. 
\end{definition}

From the definition, one can see that a quantum groupoid is a Hopf algebra if and only if the comultiplication is unit-preserving, and if and only if the counit is an algebra homomorphism. 
  
The set of axioms of quantum groupoids is self-dual. This allows to define a natural
quantum groupoid structure on the dual space $H^*$ by:
$$\langle\phi\psi,h\rangle=\langle\phi\otimes\psi,\Delta(h)\rangle$$
$$\langle\widehat{\Delta}(\phi), h\otimes g\rangle=\langle\phi, hg\rangle$$
$$\langle\widehat{S}(\phi), h\rangle=\langle\phi, S(h)\rangle$$
for all $h,g\in H$ and $\phi, \psi\in H^*$. The unit $\widehat{\eta}$ of $H^*$ is $\varepsilon$
and the counit $\widehat{\varepsilon}$ is given by $\phi\mapsto\phi(\eta)$. One can show that the dual space $H^*$ of a $C^*$-quantum groupoid is also a $C^*$-quantum gorupoid with the $*$-structure given by $\langle \phi^*,h\rangle=\overline{\langle\phi,S(h)^*\rangle}$.

The linear maps defined in (3) are called target and source counital maps
and denoted by $\varepsilon_t$ and $\varepsilon_s$ respectively,
$$\varepsilon_t(h):=(\varepsilon\otimes id)(\Delta(\eta)(h\otimes\eta))=\varepsilon(\eta_{(1)}h)\eta_{(2)}$$
$$\varepsilon_s(h):=(id\otimes\varepsilon)((\eta\otimes h)\Delta(\eta))=\varepsilon(h\eta_{(2)})\eta_{(1)}.$$

\subsection{Representation theory of $C^*$-quantum groupoids}
For a quantum groupoid $H$, let $Rep(H)$ be the category of finite dimensional $H$-modules.
Similar to Hopf algebras, $Rep(H)$ has a structure of a tensor category with duality.
For objects $U,V$ of $Rep(H)$, their tensor product is defined to be
$$U\otimes V=\Delta(\eta)\cdot(U\otimes_\mathbb{C}V)$$
where $\otimes_\mathbb{C}$ means the usual tensor product between vector spaces.
The associtivity isomorphisms are the standard ones $(U\otimes V)\otimes W\cong U\otimes(V\otimes W)$.
The target counital subalgebra $H_t$ with an $H$-module structure $h\cdot z=\varepsilon_t(hz), \forall h\in H, z\in H_t$,
play the role of tensor unit object in $Rep(H)$. For any object $V$ of $Rep(H)$, its left dual is defined to be
$V^*=Hom_\mathbb{C}(V,\mathbb{C})$ with an $H$ action given by $(h\cdot\phi)(v)=\phi(S(h)\cdot v)$, for $h\in H, \phi\in V^*, v\in V$.
\begin{prop}
 \cite{NV} The category $Rep(H)$ is a tensor category with duality.
\end{prop}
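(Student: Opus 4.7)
The plan is to verify, in turn, that the prescribed tensor product is a well-defined bifunctor on $Rep(H)$, that the associativity and unit constraints assemble into a monoidal structure, and that the left dual construction provides evaluation and coevaluation maps compatible with the $H$-action. Throughout, the key algebraic inputs are that $\Delta$ is an algebra homomorphism, that $\Delta(\eta)$ is an idempotent in $H\otimes H$ (a consequence of axiom (1) in the definition of a quantum groupoid), and the identities satisfied by the counital maps $\varepsilon_t$ and $\varepsilon_s$.

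First I would check that $U\otimes V:=\Delta(\eta)\cdot(U\otimes_{\mathbb{C}}V)$ is an $H$-submodule of $U\otimes_{\mathbb{C}}V$ under the action $h\cdot(u\otimes v)=h_{(1)}u\otimes h_{(2)}v$. Since $\Delta$ is a $*$-algebra homomorphism, $\Delta(h)\Delta(\eta)=\Delta(h\eta)=\Delta(h)$, so the image of $\Delta(\eta)$ is $H$-stable, and $\Delta(\eta)$ being an idempotent means $U\otimes V$ is a genuine retract. Functoriality on morphisms is immediate since $f\otimes g$ commutes with the action of $\Delta(\eta)$. For the associativity constraint, I would use that $(\Delta\otimes id)\Delta(\eta)=(\Delta(\eta)\otimes\eta)(\eta\otimes\Delta(\eta))$ (axiom (1)) so that both $(U\otimes V)\otimes W$ and $U\otimes(V\otimes W)$ agree, as subspaces of $U\otimes_{\mathbb{C}}V\otimes_{\mathbb{C}}W$, with the image of $(\Delta\otimes id)\Delta(\eta)=(id\otimes\Delta)\Delta(\eta)$; then the standard associator of vector spaces restricts to an $H$-linear isomorphism, and the pentagon is inherited from the vector space case.

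Next I would verify the unit axioms, which are the most delicate point because $H_t\neq \mathbb{C}\cdot\eta$ in general. The left unit constraint $l_V:H_t\otimes V\to V$ is defined by $z\otimes v\mapsto z\cdot v$ (and similarly $r_V$ on the right using $\varepsilon_s$). Using the defining formulas $\varepsilon_t(h)=\varepsilon(\eta_{(1)}h)\eta_{(2)}$ and $\varepsilon_s(h)=\varepsilon(h\eta_{(2)})\eta_{(1)}$ together with axiom (2), one verifies that these maps are $H$-linear bijections with inverses $v\mapsto \Delta(\eta)\cdot(\eta_{(1)}\otimes S(\eta_{(2)})v)$ (up to the usual rearrangement); the triangle axiom then reduces to the identity $\varepsilon_t(h_{(1)})\otimes h_{(2)}\equiv h$ modulo $\Delta(\eta)$, which again comes from the counital axioms. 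Finally, for duality, I would put the $H$-action $(h\cdot\phi)(v)=\phi(S(h)v)$ on $V^*=\operatorname{Hom}_{\mathbb{C}}(V,\mathbb{C})$, and define $\operatorname{ev}_V:V^*\otimes V\to H_t$ and $\operatorname{coev}_V:H_t\to V\otimes V^*$ using the canonical pairing composed with $\varepsilon_t$, following the standard Hopf-algebraic recipe adjusted by the antipode identities of axiom (3); the rigidity triangles then follow from $h_{(1)}S(h_{(2)})=\varepsilon_t(h)$ and $S(h_{(1)})h_{(2)}=\varepsilon_s(h)$.

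The main obstacle, as in \cite{NV}, is precisely the failure of $\Delta(\eta)=\eta\otimes\eta$: the tensor product is a proper subspace, and the unit object is the non-trivial algebra $H_t$ rather than $\mathbb{C}$. Every verification that is automatic for Hopf algebras (unit isomorphisms, the pentagon/triangle compatibility, the snake identities for duality) must be reworked so that the relevant equalities hold after projecting by $\Delta(\eta)$, using the quantum groupoid axioms (1)--(3) as substitutes for the stronger Hopf algebra identities. Once these bookkeeping checks are carried out, the proof amounts to verifying the monoidal category axioms diagram by diagram, with each diagram reducing to one of the identities listed in the definition.
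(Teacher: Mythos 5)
The paper does not prove this proposition at all---it is quoted from \cite{NV}, and the text surrounding it only records the structures involved (the truncated tensor product $U\otimes V=\Delta(\eta)\cdot(U\otimes_{\mathbb{C}}V)$, the unit $H_t$, and the dual action via $S$). Your outline reconstructs the standard argument of Nikshych--Vainerman and is sound in its essentials: the stability of the image of $\Delta(\eta)$, the identification of both triple products with the image of $(\Delta\otimes id)\Delta(\eta)=(id\otimes\Delta)\Delta(\eta)$ via axiom (1), the unit constraints through $\varepsilon_t$ and $\varepsilon_s$, and the snake identities reducing to $h_{(1)}S(h_{(2)})=\varepsilon_t(h)$ and $S(h_{(1)})h_{(2)}=\varepsilon_s(h)$ are exactly the right reductions. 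Two points are glossed over and would need to be made precise in a full write-up. First, for $V^*$ to be an $H$-module under $(h\cdot\phi)(v)=\phi(S(h)v)$ you need $S$ to be an algebra anti-homomorphism; this is not one of the listed axioms but a consequence of them that must be established separately. Second, ``the canonical pairing composed with $\varepsilon_t$'' does not literally typecheck, since the pairing lands in $\mathbb{C}$ rather than $H$; the correct evaluation and coevaluation maps insert $\Delta(\eta)$ and the antipode explicitly (e.g.\ $\mathrm{coev}$ sends $z\in H_t$ to $z\cdot\sum_i e_i\otimes e^i$ inside $\Delta(\eta)\cdot(V\otimes_{\mathbb{C}}V^*)$), and their $H$-linearity is where the counital identities actually get used. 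Neither issue is a wrong turn---both are filled in by the cited reference---but as written they are placeholders rather than proofs.
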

Note that the unit object $H_t$ may not be simple, i.e. it may decompose as direct sum of
simple objects. If this happens, $Rep(H)$ is a multitensor category. Let $Z(H)$ be the center of $H$.
$H_t$ is simple if and only if $H_t\cap Z(H)=\mathbb{C}$. 

For a $C^*$-quantum groupoid $H$, a unitary representation of $H$ is understood to be a finite dimensional Hilbert space $V$
such that the scalar product $(~,~)_V$ satisfies $(v,h\cdot w)_V=(h^*\cdot v, w)_V$ for $v,w\in V$ and $h\in H$.
The tensor product between two unitary representation is defined as above.
The tensor unit is $H_t$ equipped with scalar product $(z,w)_{H_t}=\varepsilon(zw^*)$.
The dual $V^*$ of a unitary representation $V$ is the conjugate Hilbert space.
The action of $H$ on $V^*$ is $h\cdot\overline{v}=\overline{S(h)^*v}$ for $\overline{v}\in V^*$ and $h\in H$.
$V^*$ is equipped with a scalar product $(\overline{v},\overline{w})=(w,g\cdot v)$,
where is the canonical group-like element of $H$. Explicitly, $g$ satisfies $\Delta(g)=\Delta(\eta)(g\otimes g)$ and $S^2(h)=ghg^{-1}$ for $h\in H$.
\begin{prop}
 \cite{NV} For a $C^*$-quantum groupoid $H$, the category $URep(H)$ of unitary representations is a unitary (multi)fusion category.
\end{prop}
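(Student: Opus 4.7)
The plan is to combine three ingredients: the semisimplicity of finite dimensional $C^*$-algebras, Proposition~2 (which already gives $Rep(H)$ the structure of a tensor category with duality), and the $*$-structure on $H$ which will be used to manufacture the conjugation on Hom spaces. Roughly, everything that is already known for $Rep(H)$ carries over to $URep(H)$, and the $C^*$-condition on $H$ is what upgrades it to a \emph{unitary} (multi)fusion category.

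First I would verify that $URep(H)$ is abelian semisimple with finitely many simple objects and finite dimensional Hom spaces: since $H$ is a finite dimensional $C^*$-algebra, $H\cong\bigoplus_i M_{n_i}(\mathbb{C})$, so every finite dimensional $H$-module is unitarizable, and the standard representation theory of multi-matrix algebras gives the required finiteness and semisimplicity. Next I would check that the tensor product $U\otimes V=\Delta(\eta)\cdot(U\otimes_{\mathbb{C}}V)$, equipped with the restriction of the tensor inner product, is again a unitary representation. The key computation here is
\[
(v\otimes w,\,h\cdot(v'\otimes w'))=(v\otimes w,\,h_{(1)}v'\otimes h_{(2)}w'),
\]
and moving $h$ across uses $\Delta(h^*)=\Delta(h)^*$, which is exactly the hypothesis that $\Delta$ is a $*$-homomorphism. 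The unit object $H_t$ with $(z,w)_{H_t}=\varepsilon(zw^*)$ is unitary because $\varepsilon\circ m$ is a positive trace on $H_t$, which follows from $H_t$ being a $C^*$-subalgebra of $H$ and $\varepsilon_t$ being a conditional expectation onto it.

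For duality, I would check that the given $H$-action $h\cdot\overline{v}=\overline{S(h)^*v}$ on $V^*$ is unitary with respect to the scalar product $(\overline{v},\overline{w})=(w,g\cdot v)$, using $S^2(h)=ghg^{-1}$, $\Delta(g)=\Delta(\eta)(g\otimes g)$, and the $*$-compatibility of $S$ encoded by $S(h)^*=S^{-1}(h^*)$ which is valid in any $C^*$-quantum groupoid. Once $\otimes$, $\mathbf{1}=H_t$, and $(-)^*$ are built inside $URep(H)$, the conjugation on morphisms is simply $\bar f:=f^\dagger$, the Hilbert-space adjoint, which lands in $\mathrm{Hom}_H(V,U)$ whenever $f\in\mathrm{Hom}_H(U,V)$ precisely because the action is unitary. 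Axioms (i)–(v) of a unitary fusion category become the familiar properties of the Hilbert space adjoint: antilinearity, reversal of composition and tensor product, involutivity, and faithfulness (from positive definiteness of the inner product).

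Finally, the simple/multisimple distinction for the unit. The unit object is $H_t$, and its endomorphism algebra as an $H$-module is $H_t\cap Z(H)$; thus $H_t$ is simple iff $H_t\cap Z(H)=\mathbb{C}$, and in general it decomposes into a direct sum of simples, giving the multifusion case. I expect the main obstacle to be the bookkeeping around the dual object: one must check that the pivotal/duality morphisms (evaluation and coevaluation built from $\Delta(\eta)$ and $g$) are genuine $H$-module maps and that their Hilbert space adjoints coincide with the morphisms dictated by the conjugation $f\mapsto\bar f$, so that axiom (iii) ($\overline{f\otimes g}=\bar f\otimes\bar g$) holds on tensor products defined via $\Delta(\eta)$ rather than the naive one. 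Everything else is a direct transcription of the Hopf-algebraic argument, with $\Delta$ being a $*$-homomorphism substituting for the standard bialgebra identity at each step.
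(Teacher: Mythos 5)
The paper does not prove this proposition at all: it is quoted verbatim from the survey \cite{NV}, so there is no in-text argument to compare yours against line by line. Judged on its own, your outline follows the standard route taken in the weak Hopf $C^*$-algebra literature and the ingredients you list are the right ones: semisimplicity and unitarizability from $H\cong\bigoplus_i M_{n_i}(\mathbb{C})$; closure of unitarity under $\otimes$ via $\Delta(h^*)=\Delta(h)^*$ (together with the observation, which you use implicitly, that $\Delta(\eta)$ acts as a self-adjoint idempotent, so $\Delta(\eta)\cdot(U\otimes_{\mathbb{C}}V)$ is the range of an orthogonal projection); the conjugation $\bar f=f^\dagger$; the relation $S(h)^*=S^{-1}(h^*)$, which the paper itself invokes later in the form $S(S(x^*)^*)=x$; and the identification $\mathrm{End}(H_t)\cong H_t\cap Z(H)$ governing whether the unit is simple.

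Two points deserve flagging as the places where you have named the difficulty rather than resolved it. First, positivity of the form $(z,w)_{H_t}=\varepsilon(zw^*)$ is a genuine lemma about $C^*$-quantum groupoids (positivity of the counit on the counital subalgebras), not an immediate consequence of $H_t$ being a $*$-subalgebra; your appeal to $\varepsilon_t$ being a conditional expectation is the right idea but would need to be made precise. Second, and more substantially, rigidity: Proposition 2 of the paper already asserts duality for $Rep(H)$, but to get a \emph{unitary} fusion category you must check that the evaluation and coevaluation maps (built from $\Delta(\eta)$, the antipode, and the canonical group-like element $g$) are $H$-module maps between the \emph{unitary} objects and interact correctly with $f\mapsto f^\dagger$. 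You correctly identify this as the main obstacle, but it is exactly where most of the work in \cite{NV} sits, so a complete proof would have to carry out that verification rather than defer it. As a reconstruction of a cited background result, the proposal is sound in approach and contains no step that would fail.
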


\subsection{Quantum double}
Similar to Hopf algebras, one can define the quantum double $D(H)$ for a quantum groupoid $H$ as follows.
On the vector space $H^{*cop}\otimes H$, a multiplication is given by
$$(\alpha\otimes x)(\beta\otimes y)=\alpha\beta_{(2)}\otimes x_{(2)}y\langle\beta_{(3)},S^{-1}(x_{(3)})\rangle\langle\beta_{(1)},x_{(1)}\rangle$$
Then one can verify the linear span $J$ of the elements
\begin{align}
\alpha(z\rightharpoonup\varepsilon)&\otimes x-\alpha\otimes zx,~z\in H_t,~x\in H,~\alpha\in H^* \label{eqn:target}\\
\alpha(\varepsilon\leftharpoonup w)&\otimes x-\alpha\otimes wx,~w\in H_s,~x\in H,~\alpha\in H^* \label{eqn:source}
\end{align}
is a two-sided ideal in $H^{*cop}\otimes H$. Here the action $\rightharpoonup$ is defined to be $z\rightharpoonup \varepsilon=\langle\varepsilon_{(2)}, z\rangle\varepsilon_{(1)}$ and the action $\leftharpoonup$ is defined by $\varepsilon\leftharpoonup w=\langle\varepsilon_{(1)}, w\rangle\varepsilon_{(2)}$.

$D(H)$ is defined to be the quotient $H^{*cop}\otimes H/J$
and $[\alpha\otimes x]$ denote the equivalence class of $\alpha\otimes x$.
The $D(H)$ is a quantum groupoid with unit $[\varepsilon\otimes\eta]$ and the comultiplication, counit, and antipode are given by
\begin{eqnarray*}
  \Delta([\alpha\otimes x])&=&[\alpha_{(1)}\otimes x_{(1)}]\otimes[\alpha_{(2)}\otimes x_{(2)}]\langle\alpha_{(3)},S^{-1}(x_{(3)})\rangle\langle\alpha_{(1)},x_{(1)}\rangle\\
  \varepsilon([\alpha\otimes x])&=&\varepsilon(x)\alpha(\eta)\\
  S([\alpha\otimes x])&=&[S^{-1}(\alpha_{(2)})\otimes S(x_{(2)})]\langle\alpha_{(3)},S^{-1}(x_{(3)})\rangle\langle\alpha_{(1)},x_{(1)}\rangle
\end{eqnarray*}
Both $H$ and $H^*$ can be embedded into $D(H)$ as sub quantum groupoids by
$h\mapsto[\varepsilon\otimes h]$ and $\alpha\mapsto[\alpha\otimes\eta]$.
Now consider the sets of cocommutative elements in $H$ and $H^*$.
They are $Cocom{H}=\{h\in H~|~\Delta(h)=\Delta^{op}(h)\}$ and $Cocom(H^*)=\{\alpha\in H^*~|~\alpha(xy)=\alpha(yx),~\forall x,y\in H\}$.
\begin{lemma}\label{prop:cocomutative}
  For $h\in Cocom{H}$ and $\alpha\in Cocom{H^*}$, $[\varepsilon\otimes h][\alpha\otimes\eta]=[\alpha\otimes h]=[\alpha\otimes\eta][\varepsilon\otimes h]$.
\end{lemma}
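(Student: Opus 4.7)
The plan is to compute both products using the defining multiplication on $H^{*cop}\otimes H$ and show that each reduces modulo the ideal $J$ to $[\alpha\otimes h]$, with cocommutativity of $h$ and $\alpha$ supplying the cancellations.

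First, I would expand
\begin{equation*}
(\varepsilon\otimes h)(\alpha\otimes\eta)=\varepsilon\alpha_{(2)}\otimes h_{(2)}\eta\,\langle\alpha_{(3)},S^{-1}(h_{(3)})\rangle\langle\alpha_{(1)},h_{(1)}\rangle=\alpha_{(2)}\otimes h_{(2)}\,\langle\alpha_{(3)},S^{-1}(h_{(3)})\rangle\langle\alpha_{(1)},h_{(1)}\rangle,
\end{equation*}
using that $\varepsilon$ and $\eta$ are the algebra units of $H^{*}$ and $H$. Next, I would exploit cocommutativity. The condition $\alpha(xy)=\alpha(yx)$ means $\alpha$ is a trace, which is equivalent to $\widehat\Delta(\alpha)=\widehat\Delta^{op}(\alpha)$ and so gives cyclic symmetry of the iterated coproduct $\alpha_{(1)}\otimes\alpha_{(2)}\otimes\alpha_{(3)}$. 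Similarly, $\Delta(h)=\Delta^{op}(h)$ gives cyclic symmetry of $h_{(1)}\otimes h_{(2)}\otimes h_{(3)}$. After cycling the Sweedler indices appropriately, the pairings can be regrouped so that the scalar factor becomes $\langle\alpha,\,?\cdot S^{-1}(h_{(3)})h_{(1)}\rangle$, at which point the antipode identities $h_{(1)}S(h_{(2)})=\varepsilon_{t}(h)$ and $S(h_{(1)})h_{(2)}=\varepsilon_{s}(h)$ force the extra factor to lie in $H_{t}$ or $H_{s}$.

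Once the extraneous factor is realized as an element $z\in H_{t}$ (respectively $w\in H_{s}$), I would apply the defining relations \eqref{eqn:target} and \eqref{eqn:source} of $J$ to transport it from the right tensor slot back into the left slot as a factor of the form $z\rightharpoonup\varepsilon$ or $\varepsilon\leftharpoonup w$. By coassociativity, the leftover Sweedler indices on $\alpha$ and $h$ then collapse (via $(\widehat\varepsilon\otimes\mathrm{id})\widehat\Delta=\mathrm{id}$ and $(\varepsilon\otimes\mathrm{id})\Delta=\mathrm{id}$ restricted appropriately to the counital subalgebras), producing exactly $[\alpha\otimes h]$.

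For the second equality, I would carry out the symmetric computation
\begin{equation*}
(\alpha\otimes\eta)(\varepsilon\otimes h)=\alpha\varepsilon_{(2)}\otimes\eta_{(2)}h\,\langle\varepsilon_{(3)},S^{-1}(\eta_{(3)})\rangle\langle\varepsilon_{(1)},\eta_{(1)}\rangle.
\end{equation*}
Here the nontrivial coproducts $\widehat\Delta(\varepsilon)$ and $\Delta(\eta)$ (which would be trivial in the Hopf case) produce a genuinely nonzero correction. Using the identities $\varepsilon_{(1)}(x)\varepsilon_{(2)}(y)=\varepsilon(xy)$ together with $\Delta(\eta)\in H_{t}\otimes H$ and $\Delta(\eta)\in H\otimes H_{s}$, one recognizes the correction as multiplication on the $H$-slot by a target/source element and absorbs it via \eqref{eqn:target} and \eqref{eqn:source}; cocommutativity of $\alpha$ is again what lets the resulting factor $\alpha(\,\cdot\,z)$ or $\alpha(w\,\cdot\,)$ be re-expressed as $\alpha\cdot(z\rightharpoonup\varepsilon)$ or $\alpha\cdot(\varepsilon\leftharpoonup w)$ inside $H^{*}$.

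The main obstacle is the bookkeeping in Step 2: without cocommutativity of both $\alpha$ and $h$, the permuted Sweedler indices do not reassemble into expressions that lie in $H_{t}$ or $H_{s}$, and so there is no way to apply the $J$-relations. The technical heart of the argument is therefore to keep track of which cyclic permutation is used at each step and to verify that the cancellation happens simultaneously on the $H$-side (via the antipode identities) and the $H^{*}$-side (via the dual antipode identities for $\widehat S$), which is exactly what cocommutativity of the two inputs guarantees.
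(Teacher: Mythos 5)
Your plan is essentially the paper's own proof: the paper likewise expands $[\varepsilon\otimes h][\alpha\otimes\eta]$ by the multiplication formula, uses cocommutativity of $h$ and $\alpha$ to cyclically shift the Sweedler legs, combines the two pairings into $\langle\alpha_{(2)},h_{(3)}S^{-1}(h_{(2)})\rangle$, recognizes $h_{(3)}S^{-1}(h_{(2)})=S^{-1}(h_{(2)}S(h_{(3)}))$ as a source-subalgebra element so that the ideal relation \eqref{eqn:source} converts $\alpha(\varepsilon\leftharpoonup w)\otimes h_{(1)}$ into $\alpha\otimes wh_{(1)}$, and closes with the antipode axiom $h_{(3)}S^{-1}(h_{(2)})h_{(1)}=h$; the second equality is left as ``similar,'' exactly as you sketch. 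One small correction to your bookkeeping: $\Delta(\eta)\in H_s\otimes H_t$ (so $\eta_{(1)}\in H_s$, $\eta_{(2)}\in H_t$), not $H_t\otimes H$ and $H\otimes H_s$ as you wrote, but this does not affect the viability of the argument.
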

\begin{proof}
  \begin{eqnarray*}
    [\varepsilon\otimes h][\alpha\otimes\eta]&=&[\alpha_{(2)}\otimes h_{(2)}]\langle\alpha_{(3)},S^{-1}(h_{(3)})\rangle\langle\alpha_{(1)},h_{(1)}\rangle\\
    &=&[\alpha_{(1)}\otimes h_{(1)}]\langle\alpha_{(2)},S^{-1}(h_{(2)})\rangle\langle\alpha_{(3)},h_{(3)}\rangle\\
    &=&[\alpha_{(1)}\otimes h_{(1)}]\langle\alpha_{(2)},h_{(3)}S^{-1}(h_{(2)})\rangle\\
    &=&[\alpha\leftharpoonup h_{(3)}S^{-1}(h_{(2)})\otimes h_{(1)}]\\
    &=&[\alpha(\varepsilon\leftharpoonup h_{(3)}S^{-1}(h_{(2)}))\otimes h_{(1)}]\\
    &=&[\alpha\otimes h_{(3)}S^{-1}(h_{(2)})h_{(1)}]\\
    &=&[\alpha\otimes h]
  \end{eqnarray*}
 The other equality can be verified similarly.
\end{proof}

\section{Kitaev models based on $C^*$-quantum groupoids}

Let $H$ be a $C^*$-quantum gorupoid. Given an oriented lattice $\Gamma$ on an oriented compact surface $\Sigma$.
Then the space
$$\mathcal{L}^K=\bigotimes\limits_{edges}H$$
is the Hilbert space of the model. As in \cite{BCMA}, we consider the following operators:
for all $h,x\in H$ and $\alpha\in H^*$,
\begin{eqnarray*}
  L^h_{+}(x):&=&hx,\\
  L^h_{-}(x):&=&xS(h),\\
  T^\alpha_{+}(x):&=&\langle\alpha,x_{(2)}\rangle x_{(1)},\\
  T^\alpha_{+}(x):&=&\langle\alpha,S^{-1}(x_{(1)})\rangle x_{(2)}
\end{eqnarray*}

\begin{figure}[t]
    \begin{tabular}{cc}
    \begin{minipage}[t]{0.5\textwidth}
    \centering
    \begin{tikzpicture}
  \draw[line width=0.5mm] (0,-1)--(0,1);
  \draw[->,line width=0.7mm] (0,0)--(0,0.1);
  \node (L+) at (0,1.4) {$L^h_{+}$};
  \node (L-) at (0,-1.4) {$L^h_{-}$};
  \node (T+) at (0.8,0) {$T^\alpha_{+}$};
  \node (T-) at (-0.8,0) {$T^\alpha_{-}$};
\end{tikzpicture}
    \caption{\it Kitaev convention}
    \label{fig:Kitaev convention}
    \end{minipage}
    \begin{minipage}[t]{0.5\textwidth}
    \centering
    \begin{tikzpicture}
  \draw[line width=0.5mm] (0,-0.83) arc(-60:60:1);
  \draw[line width=0.5mm] (0,-0.83) arc(240:120:1);
  \draw[->,line width=0.7mm] (0.5,0)--(0.5,0.1);
  \draw[->,line width=0.7mm] (-0.5,0)--(-0.5,-0.1);
  \node (p) at (0,0) {$\mathbf{p}$};
  \node (x1) at (1,0) {$x_1$};
  \node (x2) at (-1,0) {$x_2$};
  \node (v) at (0,1.1) {$\mathbf{v}$};
\end{tikzpicture}
    \caption{\it Site $(\mathbf{v},\mathbf{p})$}
    \label{fig:bigon}
    \end{minipage}
\end{tabular}
\end{figure}

Following Kitaev's convention shown in Figure~\ref{fig:Kitaev convention}, a pair of local operators $A_h$ and $B_\alpha$
are defined at a site $(\mathbf{v},\mathbf{p})$ of the lattice $\Gamma$.
In particular, for the site shown in Figure~\ref{fig:bigon}, the local operators are
$$A_h(\mathbf{v},\mathbf{p})=\sum\limits_{(h)}L^{h_{(1)}}_{+}\otimes L^{h_{(2)}}_{-},$$
$$B_\alpha(\mathbf{v},\mathbf{p})=\sum\limits_{(\alpha)}T^{\alpha_{(1)}}_{-}\otimes T^{\alpha_{(2)}}_{-}.$$

\begin{prop}
  The operators $A_h(\mathbf{v},\mathbf{p})$ and $B_\alpha(\mathbf{v},\mathbf{p})$ satisfy the commutation relation:
  \begin{eqnarray*}
    A_hB_\alpha&=&\sum\limits_{(h),(\alpha)}B_{\alpha_{(2)}}A_{h_{(2)}}\langle\alpha_{(3)},S^{-1}(h_{(3)})\rangle\langle\alpha_{(1)},h_{(1)}\rangle\\
    B_{\alpha(z\rightharpoonup\varepsilon)}A_h&=&B_\alpha A_{zh}\\
    B_{\alpha(\varepsilon\leftharpoonup w)}A_h&=&B_\alpha A_{wh}
  \end{eqnarray*}
  where $z\in H_t, w\in H_s$. Hence, we have an algebra homomorphism
\begin{eqnarray*}
D(H) &\rightarrow& End(H\otimes H)\\
{[\alpha\otimes h]} &\mapsto&B_\alpha A_h
\end{eqnarray*}
\end{prop}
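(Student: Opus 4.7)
The plan is to verify the three identities by direct computation on an arbitrary basis vector $x_1\otimes x_2\in H\otimes H$ supported at the site $(\mathbf{v},\mathbf{p})$, and then to recognize that the resulting formulas are exactly the multiplication of $D(H)$ together with its defining relations, so that the map $[\alpha\otimes h]\mapsto B_\alpha A_h$ is well-defined and multiplicative.

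First, for the commutation identity $A_hB_\alpha=\sum B_{\alpha_{(2)}}A_{h_{(2)}}\langle\alpha_{(3)},S^{-1}(h_{(3)})\rangle\langle\alpha_{(1)},h_{(1)}\rangle$, I would expand $A_h$ as $L^{h_{(1)}}_+\otimes L^{h_{(2)}}_-$ and $B_\alpha$ in terms of the $T^{\alpha_{(i)}}$-operators, and apply the composite to $x_1\otimes x_2$. On the left side one gets expressions like $\langle\alpha_{(1)},(h_{(1)}x_1)_{(2)}\rangle(h_{(1)}x_1)_{(1)}$ and a similar factor on the second tensorand. Using that $\Delta$ is an algebra homomorphism, $(h_{(1)}x_1)_{(1)}\otimes(h_{(1)}x_1)_{(2)}=h_{(1)(1)}x_{1(1)}\otimes h_{(1)(2)}x_{1(2)}$, the pairings split via $\langle\alpha,ab\rangle=\langle\alpha_{(1)},a\rangle\langle\alpha_{(2)},b\rangle$, and after relabeling Sweedler indices using coassociativity and the antipode identity $S(h_{(1)})\otimes h_{(2)}\otimes S^{-1}(h_{(3)})$-style rewrites, both sides reduce to a common triple-pairing expression, matching the multiplication rule of $D(H)$ written in the introduction.

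Second, for the two identities $B_{\alpha(z\rightharpoonup\varepsilon)}A_h=B_\alpha A_{zh}$ and $B_{\alpha(\varepsilon\leftharpoonup w)}A_h=B_\alpha A_{wh}$, I would use the explicit formulas $z\rightharpoonup\varepsilon=\langle\varepsilon_{(2)},z\rangle\varepsilon_{(1)}$ and $\varepsilon\leftharpoonup w=\langle\varepsilon_{(1)},w\rangle\varepsilon_{(2)}$ to rewrite $B_{\alpha(z\rightharpoonup\varepsilon)}$ and $B_{\alpha(\varepsilon\leftharpoonup w)}$ in terms of the $T^{\alpha_{(i)}}$-operators. Because $z\in H_t$ (resp.\ $w\in H_s$), the axioms of quantum groupoids tell us that $z$ (resp.\ $w$) interacts with $\Delta$ in a controlled way: specifically, $\Delta(z)(\eta\otimes\eta)=(z\otimes\eta)\Delta(\eta)$ and similar identities for $H_s$. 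Substituting these into the composition $B_\alpha A_{zh}$ after splitting $\Delta(zh)=\Delta(z)\Delta(h)$ produces the same operator as $B_{\alpha(z\rightharpoonup\varepsilon)}A_h$. This matches the relations \eqref{eqn:target} and \eqref{eqn:source} defining the ideal $J$ in $H^{\ast cop}\otimes H$.

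Finally, the algebra homomorphism $D(H)\to\mathrm{End}(H\otimes H)$ follows formally: the assignment $\alpha\otimes h\mapsto B_\alpha A_h$ extends linearly to all of $H^{\ast cop}\otimes H$, the two identities of the second step show that it annihilates the generators of $J$, hence descends to $D(H)=H^{\ast cop}\otimes H/J$, and the first identity shows precisely that $[\alpha\otimes h]\cdot[\beta\otimes g]\mapsto B_\alpha A_hB_\beta A_g$ under the multiplication rule of $D(H)$. The main obstacle, and where the argument is less routine than in the Hopf-algebra case of \cite{BCMA}, is the first identity: one cannot use $\Delta(\eta)=\eta\otimes\eta$, so intermediate expressions carry explicit factors of $\Delta(\eta)$ that must be absorbed via the counital-map axioms $\varepsilon_t,\varepsilon_s$ and the interplay of the antipode with $\Delta(\eta)$.
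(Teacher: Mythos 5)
Your plan matches the paper's proof: the paper likewise argues by direct computation on a basis vector $x_1\otimes x_2$, deferring the first (multiplication) relation to the computation in \cite{BCMA} and explicitly checking the relations \eqref{eqn:target} and \eqref{eqn:source} by expanding $B_{\alpha(z\rightharpoonup\varepsilon)}A_h$ through the $T$-operators, the counit axiom, and the special form of $\Delta(z)$ for $z\in H_t$, exactly as you outline, before concluding formally that the map descends to $D(H)=H^{*cop}\otimes H/J$. Your closing remark about the failure of $\Delta(\eta)=\eta\otimes\eta$ correctly identifies the only point where the weak-Hopf case departs from the Hopf-algebra computation.
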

\begin{proof}
  The multiplication relation follows by computation as \cite{BCMA}. Here we verify the relations \eqref{eqn:target} and \eqref{eqn:source}.
  For $x_1,x_2\in H$,
  $$B_{\alpha(z\rightharpoonup\varepsilon)}A_h(x_1\otimes x_2)=B_{\alpha(z\rightharpoonup\varepsilon)}(h_{(1)}x_1\otimes x_2S(h_{(2)}))=B_{\alpha(z\rightharpoonup\varepsilon)}(a\otimes b)$$
  where we used the temporary abbreviation $a=h_{(1)}x_1$ and $b=x_2 S(h_{(2)})$. Then
  \begin{eqnarray*}
  B_{\alpha(z\rightharpoonup\varepsilon)}A_h(x_1\otimes x_2)&=&\langle\alpha_{(1)}(z\rightharpoonup\varepsilon)_{(1)},S^{-1}(a_{(1)})\rangle a_{(2)}\otimes\langle\alpha_{(2)}(z\rightharpoonup\varepsilon)_{(2)},S^{-1}(b_{(1)})\rangle b_{(2)}\\
  &=&\langle (S^{-1}(z)\rightharpoonup\varepsilon)\alpha_{(1)},S^{-1}(a_{(1)})\rangle a_{(2)}\otimes\langle\alpha_{(2)},S^{-1}(b_{(1)})\rangle b_{(2)}\\
  &=&\langle S^{-1}(z)\rightharpoonup\varepsilon,S^{-1}(a_{(1)})\rangle\langle\alpha_{(2)},S^{-1}(a_{(1)})\rangle a_{(3)}\otimes\langle\alpha_{(2)},S^{-1}(b_{(1)})\rangle b_{(2)}\\
  &=&\langle \varepsilon,S^{-1}(a_{(1)})S^{-1}(z)\rangle\langle\alpha_{(2)},S^{-1}(a_{(1)})\rangle a_{(3)}\otimes\langle\alpha_{(2)},S^{-1}(b_{(1)})\rangle b_{(2)}\\
  &=&\langle \alpha_{(2)},S^{-1}(za_{(1)})\rangle a_{(2)}\otimes\langle\alpha_{(2)},S^{-1}(b_{(1)})\rangle b_{(2)}\\
  &=&\langle \alpha_{(2)},S^{-1}(z_{(1)}a_{(1)})\rangle z_{(2)}a_{(2)}\otimes\langle\alpha_{(2)},S^{-1}(b_{(1)})\rangle b_{(2)}\\
  &=&B_\alpha(za\otimes b)\\
  &=&B_\alpha(z_{(1)}h_{(1)}x_1\otimes x_2S(z_{(2)}h_{(2)}))\\
  &=&B_\alpha A_{zh}(x_1\otimes x_2)
  \end{eqnarray*}
Similarly, one can check that $B_{\alpha(\varepsilon\leftharpoonup w)}A_h(x_1\otimes x_2)=B_\alpha A_{wh}(x_1\otimes x_2)$.
\end{proof}

Under this homomorphism, we have the following corollary from Lemma \ref{prop:cocomutative}.
\begin{cor}
  For $h\in Cocom(H)$ and $\alpha\in Cocom(H^*)$, $B_\alpha A_h=A_h B_\alpha$.
\end{cor}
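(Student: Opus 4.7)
The plan is to view the corollary as a direct consequence of combining the Proposition's first commutation relation with Lemma \ref{prop:cocomutative}, interpreted through the algebra homomorphism $D(H) \to End(H \otimes H)$, $[\alpha \otimes h] \mapsto B_\alpha A_h$. The strategy is to show that $A_h B_\alpha$ is itself the image of some element of $D(H)$, and then invoke the lemma to rewrite that element as $[\alpha \otimes h]$, whose image is by definition $B_\alpha A_h$.

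First I would take the first commutation relation in the Proposition and write it out:
\[
A_h B_\alpha = \sum_{(h),(\alpha)} B_{\alpha_{(2)}} A_{h_{(2)}} \, \langle \alpha_{(3)}, S^{-1}(h_{(3)}) \rangle \langle \alpha_{(1)}, h_{(1)} \rangle.
\]
The next step is to recognize the right-hand side as the image under the algebra homomorphism of the product $[\varepsilon \otimes h] \cdot [\alpha \otimes \eta]$ in $D(H)$: indeed, applying the multiplication rule for $D(H)$ recalled in Section 4.3 with the choices $\alpha = \varepsilon$, $x = h$, $\beta = \alpha$, $y = \eta$ yields exactly $[\alpha_{(2)} \otimes h_{(2)}] \langle \alpha_{(3)}, S^{-1}(h_{(3)}) \rangle \langle \alpha_{(1)}, h_{(1)} \rangle$, and the homomorphism sends this to the sum above.

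Now I would apply Lemma \ref{prop:cocomutative}: for $h \in Cocom(H)$ and $\alpha \in Cocom(H^*)$, one has $[\varepsilon \otimes h] \cdot [\alpha \otimes \eta] = [\alpha \otimes h]$ in $D(H)$. Since the assignment $[\alpha \otimes h] \mapsto B_\alpha A_h$ is a well-defined algebra homomorphism (established in the Proposition), equal elements of $D(H)$ have equal images in $End(H \otimes H)$. Therefore
\[
A_h B_\alpha = \text{image of } [\varepsilon \otimes h] \cdot [\alpha \otimes \eta] = \text{image of } [\alpha \otimes h] = B_\alpha A_h,
\]
which is the desired commutation relation.

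The only subtle point, and the main thing to verify cleanly, is the middle identification: that the right-hand side of the expanded commutation relation really does coincide with the image of the $D(H)$-product $[\varepsilon \otimes h][\alpha \otimes \eta]$, rather than, say, the product in the reverse order or some other variant. This amounts to matching indices $(1),(2),(3)$ of $\alpha$ and of $h$ between the Proposition's formula and the multiplication rule defining $D(H)$, and is purely a bookkeeping check. Once this identification is made, both $Cocom$ hypotheses enter exclusively through the lemma, and nothing further is required.
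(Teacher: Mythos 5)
Your proposal is correct and is exactly the argument the paper intends: the corollary is stated immediately after the homomorphism $[\alpha\otimes h]\mapsto B_\alpha A_h$ with the remark that it follows ``under this homomorphism'' from Lemma~\ref{prop:cocomutative}, and your index-matching between the Proposition's first commutation relation and the $D(H)$ product $[\varepsilon\otimes h][\alpha\otimes\eta]$ checks out. The only property of the map you actually need is its well-definedness on the quotient $H^{*cop}\otimes H/J$, which the Proposition supplies, so nothing is missing.
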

For $h\in Cocom(H)$, the vertex operator $A_h(\mathbf{v}):=A_h(\mathbf{v},\mathbf{p})$ does not depend on the choice of the adjacent faces $\mathbf{p}$'s. So does the plaquette operator $B_\alpha(\mathbf{p}):=B_\alpha(\mathbf{v},\mathbf{p})$ for $\alpha\in Cocom(H^*)$. Indeed, by cocommutativity, the coproduct can be permuted cyclically.
\begin{prop}\label{prop:ABoperators}
  Given $h\in Cocom(H)$ and $\alpha\in Cocom(H^*)$ in a quantum groupoid $H$, all operators $A_h(\mathbf{v})$, $B_\alpha(\mathbf{p})$ commute with each other.
\end{prop}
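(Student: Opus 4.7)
The plan is to split the proposition into three types of commutation relations: $A_h(\mathbf{v})$ with $A_{h'}(\mathbf{v}')$, $B_\alpha(\mathbf{p})$ with $B_{\alpha'}(\mathbf{p}')$, and $A_h(\mathbf{v})$ with $B_\alpha(\mathbf{p})$. In each case, if the supports on the edge set are disjoint the operators act on distinct tensor factors of $\mathcal{L}^K$ and commute trivially, so the content lies entirely in the overlapping configurations. The enabling fact throughout is that cocommutativity of $h$ makes the iterated coproduct $\Delta^{(n-1)}(h)=h_{(1)}\otimes\cdots\otimes h_{(n)}$ invariant under every permutation of its tensor slots; the dual statement holds for $\widehat{\Delta}^{(m-1)}(\alpha)$ when $\alpha$ is cocommutative. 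This is exactly what makes $A_h(\mathbf{v})$ and $B_\alpha(\mathbf{p})$ well-defined once we forget the adjacent face or vertex, and it allows me to permute which coproduct factor is assigned to which edge.

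For two vertex operators $A_h(\mathbf{v}),A_{h'}(\mathbf{v}')$ sharing an edge $e$, the two endpoints of $e$ give opposite orientations, so on $e$ one operator acts by a left multiplication $L^{h_{(i)}}_+$ while the other acts by a right multiplication by the antipode $L^{h'_{(j)}}_-$; left and right multiplications on the algebra $H$ always commute, and cocommutativity lets me align the tensor factors so that exactly these two appear on $e$. A completely parallel argument, using the cocommutativity of $\alpha,\alpha'$ and the fact that the two $T$-type operators induced on a shared edge come from the coproduct applied ``on the left'' versus ``on the right'' (and hence commute), handles two plaquette operators.

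For an $A_h(\mathbf{v})$--$B_\alpha(\mathbf{p})$ pair the only nontrivial case is when $\mathbf{v}$ is a corner of $\mathbf{p}$, in which case the supports overlap on the two edges that define the site $(\mathbf{v},\mathbf{p})$. Using the symmetry of both $\Delta^{(n-1)}(h)$ and $\widehat{\Delta}^{(m-1)}(\alpha)$, I will pick representatives so that these two shared edges receive exactly the tensor factors appearing in the local operators $A_h(\mathbf{v},\mathbf{p})$ and $B_\alpha(\mathbf{v},\mathbf{p})$ of the previous section. On every other edge only one of the two operators acts, so commutation there is automatic; on the shared pair the commutation is precisely the content of the corollary preceding this proposition.

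The main obstacle is the combinatorial bookkeeping in the last case: I must verify that the permutation of coproduct factors granted by cocommutativity really does produce the same-site operators on the two shared edges together with the identity in the other slots, with the correct assignment of $+$ and $-$ signs coming from the lattice orientations. Once this alignment is made explicit, the argument collapses to the corollary, and the whole proposition follows with no additional algebra.
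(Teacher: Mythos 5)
Your proposal is correct and follows exactly the route the paper intends: the paper states this proposition without proof, relying on the preceding corollary for the same-site case and on the remark that cocommutativity permits cyclic permutation of the coproduct legs (which is what makes $A_h(\mathbf{v})$ and $B_\alpha(\mathbf{p})$ independent of the chosen site and reduces every overlapping configuration either to commuting left/right multiplications, to coassociativity for the two $T$-type actions, or to the corollary itself). Your case analysis and the reduction to the corollary are precisely the standard argument being invoked.
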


\section{Kitaev and Kong's quantum groupoids}

In the following sections, we assume the fusion categories are self-dual and multiplicity free. In addition, we assume their Frobenius-Schur indicators are trivial which allows the $F$-matrices endowed with certain symmetry.

\subsection{$C^*$-quantum groupoid $H_\mathcal{C}$ for unitary fusion category $\mathcal{C}$.}
In \cite{KK} Kitaev and Kong constructed a $C^*$-quantum groupoid $H_\mathcal{C}$ from a unitary fusion category $\mathcal{C}$.
Here we give an equivalent definition. As a vector space,
$$H_\mathcal{C}=\bigoplus\limits_{a,b,c,d,i\in Irr(\mathcal{C})} Hom(b,a\otimes i)\otimes Hom(i\otimes d,c)$$
and it is spanned by the basis $e^{ab}_{i;cd}$, where $i,a,b,c,d$ are
representatives of simple objects of $\mathcal{C}$. The basis vector $e^{ab}_{i;cd}$ has a graphical interpretation.
\[
\begin{tikzpicture}[scale=0.5]
  \node (e) at (0,0) {$e^{ab}_{i;cd}=$};
  \begin{scope}[xshift=3cm]
  \draw[line width=0.5mm] (-1,0)--(1,0);
  \draw[line width=0.5mm] (-1,1)--(-1,-1);
  \draw[line width=0.5mm] (1,1)--(1,-1);
  \node (a) at (-1.3,1.3) {$a$};
  \node (b) at (-1.3,-1.3) {$b$};
  \node (c) at (1.3,1.3) {$c$};
  \node (d) at (1.3,-1.3) {$d$};
  \node (i) at (0,0.5) {$i$};
  \end{scope}
\end{tikzpicture}
\]
These basis vectors are linearly transformed from the basis used in \cite{KK} by $F$-moves. The quantum groupoid structure is given as follows in terms of the basis $\{e^{ab}_{i;cd}\}$.

\noindent
Multiplication
\[
\begin{tikzpicture}[scale=0.5]
  \begin{scope}[xshift=-7.5cm]
  \draw[line width=0.5mm] (-1,0)--(1,0);
  \draw[line width=0.5mm] (-1,1)--(-1,-1);
  \draw[line width=0.5mm] (1,1)--(1,-1);
  \node (a) at (-1.3,1.3) {$a$};
  \node (b) at (-1.3,-1.3) {$b$};
  \node (c) at (1.3,1.3) {$c$};
  \node (d) at (1.3,-1.3) {$d$};
  \node (i) at (0,0.5) {$i$};
  \end{scope}
  \node (dot) at (-5.5,0) {$\cdot$};
  \begin{scope}[xshift=-3.5cm]
  \draw[line width=0.5mm] (-1,0)--(1,0);
  \draw[line width=0.5mm] (-1,1)--(-1,-1);
  \draw[line width=0.5mm] (1,1)--(1,-1);
  \node (a) at (-1.3,1.3) {$a'$};
  \node (b) at (-1.3,-1.3) {$b'$};
  \node (c) at (1.3,1.3) {$c'$};
  \node (d) at (1.3,-1.3) {$d'$};
  \node (i) at (0,0.5) {$i'$};
  \end{scope}
  \node (=) at (0,0) {$=\frac{\delta_{c,a'}\delta_{d,b'}\delta_{i,i'}}{\sqrt{d_i}}$};
  \begin{scope}[xshift=4cm]
  \draw[line width=0.5mm] (-1,0)--(1,0);
  \draw[line width=0.5mm] (-1,1)--(-1,-1);
  \draw[line width=0.5mm] (1,1)--(1,-1);
  \node (a) at (-1.3,1.3) {$a$};
  \node (b) at (-1.3,-1.3) {$b$};
  \node (c) at (1.3,1.3) {$c'$};
  \node (d) at (1.3,-1.3) {$d'$};
  \node (i) at (0,0.5) {$i'$};
  \end{scope}
\end{tikzpicture}
\]
\noindent
Unit
\[
\begin{tikzpicture}[scale=0.5]
  \node (unit) at (0,-0.3) {$\eta=\sum\limits_{a,b,i}\sqrt{d_i}$};
  \begin{scope}[xshift=4cm]
  \draw[line width=0.5mm] (-1,0)--(1,0);
  \draw[line width=0.5mm] (-1,1)--(-1,-1);
  \draw[line width=0.5mm] (1,1)--(1,-1);
  \node (a) at (-1.3,1.3) {$a$};
  \node (b) at (-1.3,-1.3) {$b$};
  \node (c) at (1.3,1.3) {$a$};
  \node (d) at (1.3,-1.3) {$b$};
  \node (i) at (0,0.5) {$i$};
  \end{scope}
\end{tikzpicture}
\]
\noindent
Comultiplication
\[
\begin{tikzpicture}[scale=0.5]
  \begin{scope}[xshift=-3.5cm]      
  \node (delta) at (-3,0) {$\Delta$};
  \node (<) at (-2.2,0) {$\Bigg($};
  \node (>) at (2.2,0) {$\Bigg)$};
  \draw[line width=0.5mm] (-1,0)--(1,0);
  \draw[line width=0.5mm] (-1,1)--(-1,-1);
  \draw[line width=0.5mm] (1,1)--(1,-1);
  \node (a) at (-1.3,1.3) {$a$};
  \node (b) at (-1.3,-1.3) {$b$};
  \node (c) at (1.3,1.3) {$c$};
  \node (d) at (1.3,-1.3) {$d$};
  \node (i) at (0,0.5) {$i$};
  \end{scope}
  \node (=) at (0,-0.5) {$=\sum\limits_{\substack{j,k\\p,q}}$};
  \begin{scope}[xshift=5.8cm]             
    \node (<) at (-4.3,0) {$\Bigg\langle$};
    \node (>) at (4.3,0) {$\Bigg\rangle$};
    \begin{scope}[xshift=-2cm]
  \draw[line width=0.5mm] (-1,0)--(1,0);
  \draw[line width=0.5mm] (-1,1)--(-1,-1);
  \draw[line width=0.5mm] (1,1)--(1,-1);
  \node (a) at (-1.3,1.3) {$a$};
  \node (b) at (-1.3,-1.3) {$b$};
  \node (c) at (1.3,1.3) {$c$};
  \node (d) at (1.3,-1.3) {$d$};
  \node (i) at (0,0.5) {$i$};
  \node (hat) at (0,1.9) {$\wedge$};
    \end{scope}
    \node (comma) at (0,-0.8) {$,$};
    \begin{scope}[xshift=2cm]
  \draw[line width=0.5mm] (-1,0.5)--(1,0.5);
  \draw[line width=0.5mm] (-1,-0.5)--(1,-0.5);
  \draw[line width=0.5mm] (-1,1.5)--(-1,-1.5);
  \draw[line width=0.5mm] (1,1.5)--(1,-1.5);
  \node (a) at (-1.3,1.8) {$a$};
  \node (b) at (-1.3,-1.8) {$b$};
  \node (c) at (1.3,1.8) {$c$};
  \node (d) at (1.3,-1.8) {$d$};
  \node (p) at (-1.3,0) {$p$};
  \node (q) at (1.3,0) {$q$};
  \node (j) at (0,1) {$j$};
  \node (k) at (0,-1) {$k$};
    \end{scope}
  \end{scope}
  \begin{scope}[xshift=14.6cm]  
    \begin{scope}[xshift=-2.2cm]
  \draw[line width=0.5mm] (-1,0)--(1,0);
  \draw[line width=0.5mm] (-1,1)--(-1,-1);
  \draw[line width=0.5mm] (1,1)--(1,-1);
  \node (a) at (-1.3,1.3) {$a$};
  \node (b) at (-1.3,-1.3) {$p$};
  \node (c) at (1.3,1.3) {$c$};
  \node (d) at (1.3,-1.3) {$q$};
  \node (i) at (0,0.5) {$j$};
    \end{scope}
    \node (tensor) at (0,0) {$\otimes$};
    \begin{scope}[xshift=2.2cm]
  \draw[line width=0.5mm] (-1,0)--(1,0);
  \draw[line width=0.5mm] (-1,1)--(-1,-1);
  \draw[line width=0.5mm] (1,1)--(1,-1);
  \node (a) at (-1.3,1.3) {$p$};
  \node (b) at (-1.3,-1.3) {$b$};
  \node (c) at (1.3,1.3) {$q$};
  \node (d) at (1.3,-1.3) {$d$};
  \node (i) at (0,0.5) {$k$};
    \end{scope}
  \end{scope}
\end{tikzpicture}
\]
\noindent
Here, the hat means the dual basis in the dual space $H^*_\mathcal{C}$ and the pairing means the coefficient after replacing the ladder by the linear combination in terms of $e^{ab}_{i;cd}$'s by $F$-moves.\\
\noindent
Counit
\[
\begin{tikzpicture}[scale=0.5]
  \begin{scope}[xshift=-4.8cm]
  \node (epsilon) at (-2.8,0) {$\varepsilon$};
  \node (<) at (-2.2,0) {$\Bigg($};
  \node (>) at (2.2,0) {$\Bigg)$};
  \draw[line width=0.5mm] (-1,0)--(1,0);
  \draw[line width=0.5mm] (-1,1)--(-1,-1);
  \draw[line width=0.5mm] (1,1)--(1,-1);
  \node (a) at (-1.3,1.3) {$a$};
  \node (b) at (-1.3,-1.3) {$b$};
  \node (c) at (1.3,1.3) {$c$};
  \node (d) at (1.3,-1.3) {$d$};
  \node (i) at (0,0.5) {$i$};
  \end{scope}
  \node (=) at (0,0) {$=\delta_{a,b}\delta_{c,d}\delta_{i,\textbf{1}}$};
\end{tikzpicture}
\]
\noindent
Antipode
\[
\begin{tikzpicture}[scale=0.5]
  \begin{scope}[xshift=-4.3cm]
  \node (S) at (-2.9,0) {$S$};
  \node (<) at (-2.2,0) {$\Bigg($};
  \node (>) at (2.2,0) {$\Bigg)$};
  \draw[line width=0.5mm] (-1,0)--(1,0);
  \draw[line width=0.5mm] (-1,1)--(-1,-1);
  \draw[line width=0.5mm] (1,1)--(1,-1);
  \node (a) at (-1.3,1.3) {$a$};
  \node (b) at (-1.3,-1.3) {$b$};
  \node (c) at (1.3,1.3) {$c$};
  \node (d) at (1.3,-1.3) {$d$};
  \node (i) at (0,0.5) {$i$};
  \end{scope}
  \node (=) at (0,0) {$=\frac{\sqrt{d_bd_c}}{\sqrt{d_ad_d}}$};
  \begin{scope}[xshift=3.3cm]
  \draw[line width=0.5mm] (-1,0)--(1,0);
  \draw[line width=0.5mm] (-1,1)--(-1,-1);
  \draw[line width=0.5mm] (1,1)--(1,-1);
  \node (a) at (-1.3,1.3) {$d$};
  \node (b) at (-1.3,-1.3) {$c$};
  \node (c) at (1.3,1.3) {$b$};
  \node (d) at (1.3,-1.3) {$a$};
  \node (i) at (0,0.5) {$i$};
  \end{scope}
\end{tikzpicture}
\]
\noindent
Star
\[
\begin{tikzpicture}[scale=0.5]
  \begin{scope}[xshift=-3.3cm]
  \node (<) at (-2.2,0) {$\Bigg($};
  \node (>) at (2.2,0) {$\Bigg)^*$};
  \draw[line width=0.5mm] (-1,0)--(1,0);
  \draw[line width=0.5mm] (-1,1)--(-1,-1);
  \draw[line width=0.5mm] (1,1)--(1,-1);
  \node (a) at (-1.3,1.3) {$a$};
  \node (b) at (-1.3,-1.3) {$b$};
  \node (c) at (1.3,1.3) {$c$};
  \node (d) at (1.3,-1.3) {$d$};
  \node (i) at (0,0.5) {$i$};
  \end{scope}
  \node (=) at (0,0) {$=$};
  \begin{scope}[xshift=2.5cm]
  \draw[line width=0.5mm] (-1,0)--(1,0);
  \draw[line width=0.5mm] (-1,1)--(-1,-1);
  \draw[line width=0.5mm] (1,1)--(1,-1);
  \node (a) at (-1.3,1.3) {$c$};
  \node (b) at (-1.3,-1.4) {$d$};
  \node (c) at (1.3,1.3) {$a$};
  \node (d) at (1.3,-1.4) {$b$};
  \node (i) at (0,0.5) {$i$};
  \end{scope}
\end{tikzpicture}
\]

Everything in checking the satisfactory for the axioms of quantum groupoids is straightforward except that the comuplication is an algebra homomorphism. Indeed, it needs the unitarity of $F$-matrices. Thus a unitary fusion category $\mathcal{C}$ is essential for the quantum groupoid structure of $H_\mathcal{C}$. The interested readers can consult \cite{PZ} (Appendix A) for details.

\begin{lemma}\label{prop:comultiplication}
  The comultiplication can be written in terms of symmetric $6j$-symbols
\[
\begin{tikzpicture}[scale=0.5]
  \begin{scope}[xshift=-5.2cm]      
  \node (delta) at (-3,0) {$\Delta$};
  \node (<) at (-2.2,0) {$\Bigg($};
  \node (>) at (2.2,0) {$\Bigg)$};
  \draw[line width=0.5mm] (-1,0)--(1,0);
  \draw[line width=0.5mm] (-1,1)--(-1,-1);
  \draw[line width=0.5mm] (1,1)--(1,-1);
  \node (a) at (-1.3,1.3) {$a$};
  \node (b) at (-1.3,-1.3) {$b$};
  \node (c) at (1.3,1.3) {$c$};
  \node (d) at (1.3,-1.3) {$d$};
  \node (i) at (0,0.5) {$i$};
  \end{scope}
  \node (=) at (0,-0.4) {$=\sum\limits_{\substack{j,k\\p,q}}\frac{\sqrt{d_id_jd_k}}{\sqrt{d_ad_bd_cd_d}}$};
  \begin{scope}[xshift=5.5cm]
  \node (<) at (-2.6,0.2) {$\big\langle$};
  \node (>) at (2.6,0.2) {$\big\rangle$};
  \draw[line width=0.5mm] (0,1)--(0,2);
  \draw[line width=0.5mm] (0.86,-0.5)--(1.73,-1);
  \draw[line width=0.5mm] (-0.86,-0.5)--(-1.73,-1);
  \draw[line width=0.5mm] (1,0) arc(0:360:1);
  \node (a) at (-0.1,2.4) {$i$};
  \node (b) at (-2,-1.2) {$k$};
  \node (c) at (2,-1.2) {$j$};
  \node (m) at (1.6,0.2) {$a$};
  \node (n) at (-1.5,0.2) {$b$};
  \node (k) at (0,-1.5) {$p$};
  \end{scope}
  \begin{scope}[xshift=11.2cm]
  \node (<) at (-2.6,0.2) {$\big\langle$};
  \node (>) at (2.6,0.2) {$\big\rangle$};
  \draw[line width=0.5mm] (0,1)--(0,2);
  \draw[line width=0.5mm] (0.86,-0.5)--(1.73,-1);
  \draw[line width=0.5mm] (-0.86,-0.5)--(-1.73,-1);
  \draw[line width=0.5mm] (1,0) arc(0:360:1);
  \node (a) at (-0.1,2.4) {$i$};
  \node (b) at (-2,-1.2) {$j$};
  \node (c) at (2,-1.2) {$k$};
  \node (m) at (1.6,0.2) {$d$};
  \node (n) at (-1.5,0.2) {$c$};
  \node (k) at (0,-1.5) {$q$};
  \end{scope}
  \begin{scope}[xshift=18.1cm]  
    \begin{scope}[xshift=-2.2cm]
  \draw[line width=0.5mm] (-1,0)--(1,0);
  \draw[line width=0.5mm] (-1,1)--(-1,-1);
  \draw[line width=0.5mm] (1,1)--(1,-1);
  \node (a) at (-1.3,1.3) {$a$};
  \node (b) at (-1.3,-1.3) {$p$};
  \node (c) at (1.3,1.3) {$c$};
  \node (d) at (1.3,-1.3) {$q$};
  \node (i) at (0,0.5) {$j$};
    \end{scope}
    \node (tensor) at (0,0) {$\otimes$};
    \begin{scope}[xshift=2.2cm]
  \draw[line width=0.5mm] (-1,0)--(1,0);
  \draw[line width=0.5mm] (-1,1)--(-1,-1);
  \draw[line width=0.5mm] (1,1)--(1,-1);
  \node (a) at (-1.3,1.3) {$p$};
  \node (b) at (-1.3,-1.3) {$b$};
  \node (c) at (1.3,1.3) {$q$};
  \node (d) at (1.3,-1.3) {$d$};
  \node (i) at (0,0.5) {$k$};
    \end{scope}
  \end{scope}
\end{tikzpicture}
\]
\noindent
where $j,k$ must be admissible with $i$.
\end{lemma}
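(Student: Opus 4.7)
The plan is to compute the pairing coefficient
\[
\bigl\langle \widehat{e^{ab}_{i;cd}},\, \mathrm{Lad}_{j,k,p,q}\bigr\rangle
\]
appearing in the original definition of $\Delta$, then repackage the result in terms of symmetric $6j$-symbols. This coefficient is, by construction, the scalar that multiplies $e^{ab}_{i;cd}$ when the ladder diagram (with fixed outer labels $a,b,c,d$, middle verticals $p,q$, and rungs $j$ on top and $k$ on bottom) is expanded in the single-rung basis $\{e^{ab}_{i';cd}\}$ via $F$-moves.

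The first step is to split the ladder into a left column, which represents a composition $b\to p\otimes k\to (a\otimes j)\otimes k$ lying in $\mathrm{Hom}(b,(a\otimes j)\otimes k)$, and a right column representing $j\otimes(k\otimes d)\to j\otimes q\to c$ in $\mathrm{Hom}(j\otimes(k\otimes d),c)$. An $F$-move (associativity) applied to each column rewrites the factorization through the middle vertical as a linear combination indexed by the new intermediate label $i\in j\otimes k$: the left column becomes $\sum_i F^{ajk}_{b;ip}\cdot(\text{left column through }i)$, and the right column becomes $\sum_i F^{jkd}_{c;qi}\cdot(\text{right column through }i)$. After these two moves, the ladder is a sum over $i$ of single-rung diagrams $e^{ab}_{i;cd}$, each decorated on both sides by a small $(j,k,i)$-bubble where the two original rungs now emerge from the fused rung.

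The second step is to collapse those bubbles using the symmetric $6j$-symbol identity $G^{abc}_{kmn}=\frac{\sqrt{d_md_k}}{\sqrt{d_c}}F^{amk}_{b;cn}$ recorded earlier, so that the pair of $F$-symbols becomes the pair of bubble diagrams $\bigl\langle G^{i,k,j}_{p,a,b}\bigr\rangle$ and $\bigl\langle G^{i,j,k}_{q,d,c}\bigr\rangle$ that appear in the statement. Pairing the resulting expansion with $\widehat{e^{ab}_{i;cd}}$ then selects the $i$-component and produces the correct product of $G$-symbols, modulo the overall normalization. Collecting all square-root factors — those from $\theta(a,b,c)=\sqrt{d_ad_bd_c}$ used to normalize the basis $\{e^{ab}_{i;cd}\}$ (equivalently, the $1/\sqrt{d_i}$ visible in the multiplication rule), from each $F$-to-$G$ conversion, and from the $F$-move bubble resolution $\sqrt{d_n/(d_ad_b)}$ — should combine to the displayed prefactor $\sqrt{d_id_jd_k}/\sqrt{d_ad_bd_cd_d}$.

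The main obstacle is the bookkeeping of these normalizations and the correct placement of indices in the tetrahedrally symmetric $G$-symbols; no deep input is needed beyond the $F$-move identity \eqref{eqn:Fmove}, the conversion $G=\frac{\sqrt{d_md_k}}{\sqrt{d_c}}F$, and the tetrahedral symmetries of $G$ (the same ones invoked in the proof of the preceding lemma). Signs are not an issue since Frobenius--Schur indicators are trivial by assumption. A useful sanity check is the specialization $k=\mathbf{1}$, which forces $i=j$ and $q=d$ and collapses one $G$-symbol to a single bubble $\sqrt{d_j/(d_ad_b)}\,\delta$-type factor; in that case the formula must reduce to the comultiplication of a basis element routed through a trivial rung, which can be verified directly from the original pairing definition.
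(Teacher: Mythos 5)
Your proposal is correct in substance, but it takes a different route from the paper's own proof of this lemma. The paper computes the pairing coefficient as a quotient of two closed-diagram evaluations: it glues the ladder against the conjugate of $e^{ab}_{i;cd}$ to obtain a closed planar graph, evaluates that graph by peeling off one symmetric $6j$-symbol and reducing the remaining circle-with-internal-tree to $F^{ajk}_{b;ip}\,\theta(i,a,b)\,\tfrac{\sqrt{d_jd_k}}{\sqrt{d_i}}=\theta(i,j,k)\,G^{ikj}_{pab}$, and then divides by the theta-type norm of the basis vector, so the tetrahedrally symmetric prefactor $\sqrt{d_id_jd_k}/\sqrt{d_ad_bd_cd_d}$ emerges automatically. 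You instead perform the basis change on the open ladder: one $F$-move per column re-routes each column through an intermediate $i$ with $i,j,k$ admissible, the resulting $(j,k,i)$-bubble on the rung is removed with the factor $\sqrt{d_jd_k/d_i}$ coming from \eqref{eqn:Fmove}, and the two $F$-symbols are then converted into $G$-symbols. This is legitimate, and it is in fact exactly the computation the paper carries out later (in the proof of the decomposition of $V_j\otimes V_k$) to obtain the $F$-symbol form \eqref{eqn:Fcomultiplication} of the comultiplication; on your route the present lemma amounts to translating \eqref{eqn:Fcomultiplication} into symmetric $6j$-symbols via $G^{abc}_{kmn}=\tfrac{\sqrt{d_md_k}}{\sqrt{d_c}}F^{amk}_{b;cn}$ together with the tetrahedral symmetries (and the reality/unitarity of the $F$-matrices, needed to reconcile index placements such as $F^{jkd}_{c;qi}$ versus $F^{dkj}_{c;iq}$). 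What the paper's closed-diagram route buys is that the symmetric normalization is manifest from the start; what your route buys is a more transparent appearance of the single sum over $i$ and of the admissibility constraint, at the cost of deferring all the dimension bookkeeping — which is essentially the whole content of the identity — to the final $F$-to-$G$ conversion. Your $k=\mathbf{1}$ sanity check is a good one; note that it forces $p=b$ as well as $i=j$ and $q=d$.
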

\begin{proof}
  Note that
  \[
  \begin{tikzpicture}[scale=0.5]
    \begin{scope}[xshift=-5.2cm]             
    \node (<) at (-4.3,0) {$\Bigg\langle$};
    \node (>) at (4.3,0) {$\Bigg\rangle$};
    \begin{scope}[xshift=-2cm]
  \draw[line width=0.5mm] (-1,0)--(1,0);
  \draw[line width=0.5mm] (-1,1)--(-1,-1);
  \draw[line width=0.5mm] (1,1)--(1,-1);
  \node (a) at (-1.3,1.3) {$a$};
  \node (b) at (-1.3,-1.3) {$b$};
  \node (c) at (1.3,1.3) {$c$};
  \node (d) at (1.3,-1.3) {$d$};
  \node (i) at (0,0.5) {$i$};
  \node (hat) at (0,1.9) {$\wedge$};
    \end{scope}
    \node (comma) at (0,-0.8) {$,$};
    \begin{scope}[xshift=2cm]
  \draw[line width=0.5mm] (-1,0.5)--(1,0.5);
  \draw[line width=0.5mm] (-1,-0.5)--(1,-0.5);
  \draw[line width=0.5mm] (-1,1.5)--(-1,-1.5);
  \draw[line width=0.5mm] (1,1.5)--(1,-1.5);
  \node (a) at (-1.3,1.8) {$a$};
  \node (b) at (-1.3,-1.8) {$b$};
  \node (c) at (1.3,1.8) {$c$};
  \node (d) at (1.3,-1.8) {$d$};
  \node (p) at (-1.3,0) {$p$};
  \node (q) at (1.3,0) {$q$};
  \node (j) at (0,1) {$j$};
  \node (k) at (0,-1) {$k$};
    \end{scope}
  \end{scope}
  \node (=) at (0,0) {$=$};
  \begin{scope}[xshift=3.2cm,scale=1.2]
  \draw[line width=0.5mm] (0,1)--(0,2);
  \draw[line width=0.5mm] (0.86,-0.5)--(1.73,-1);
  \draw[line width=0.5mm] (-0.86,-0.5)--(-1.73,-1);
  \draw[line width=0.5mm] (1,0) arc(0:360:1);
  \draw[line width=0.5mm] (2,0) arc(0:360:2);
  \node (a) at (1.8,1.6) {$b$};
  \node (b) at (-1.8,1.6) {$a$};
  \node (c) at (0,-2.5) {$p$};
  \node (m) at (-0.4,1.4) {$i$};
  \node (n) at (-1.5,-0.5) {$j$};
  \node (k) at (1.5,-0.5) {$k$};
  \node (p) at (1.2,0.7) {$d$};
  \node (q) at (-1.2,0.7) {$c$};
  \node (r) at (0,-0.6) {$q$};
  \end{scope}
  \node (quotient) at (6.6,0) {$\Bigg/$};
  \begin{scope}[xshift=9.9cm,scale=1.2]
  \draw[line width=0.5mm] (1,0)--(2,0);
  \draw[line width=0.5mm] (-1,0)--(-2,0);
  \draw[line width=0.5mm] (1,0) arc(0:360:1);
  \draw[line width=0.5mm] (2,0) arc(0:360:2);
  \node (a) at (0,2.4) {$a$};
  \node (c) at (0,-2.5) {$b$};
  \node (m) at (1.5,0.4) {$i$};
  \node (n) at (-1.5,0.4) {$i$};
  \node (k) at (0,-1.4) {$d$};
  \node (p) at (0,1.3) {$c$};
  \end{scope}
  \end{tikzpicture}
  \]
\noindent
The lemma follows by calculating these circle values by symmetric $6j$-symbols and $\theta(i,j,k)$. After factoring out one symmetric $6j$-symbols, one needs to evaluate the following circle
\[
\begin{tikzpicture}[scale=0.5]
  \begin{scope}[xshift=-6.7cm]
  \draw[line width=0.5mm] (2,0) arc(0:360:2);
  \draw[line width=0.5mm] (0,0)--(0,2);
  \draw[line width=0.5mm] (0,0)--(1.73,-1);
  \draw[line width=0.5mm] (0,0)--(-1.73,-1);
  \node (a) at (-0.4,1) {$i$};
  \node (b) at (-1,0) {$j$};
  \node (c) at (1.1,0) {$k$};
  \node (m) at (2.4,0.4) {$b$};
  \node (n) at (-2.4,0.4) {$a$};
  \node (k) at (0,-2.6) {$p$};
  \end{scope}
  \node (=) at (0,0) {$=F^{ajk}_{b,ip}\theta(i,a,b)\frac{\sqrt{d_jd_k}}{\sqrt{d_i}}$.};
\end{tikzpicture}
\]
\noindent
Note that $F^{ajk}_{b,ip}=\overline{F^{bkj}_{a,ip}}$. Combine this and the relation among $F$-symbols, $G$-symbols and the tetrahedron, one obtains
\[
\begin{tikzpicture}[scale=0.5]
  \begin{scope}[xshift=-4.7cm]
  \draw[line width=0.5mm] (2,0) arc(0:360:2);
  \draw[line width=0.5mm] (0,0)--(0,2);
  \draw[line width=0.5mm] (0,0)--(1.73,-1);
  \draw[line width=0.5mm] (0,0)--(-1.73,-1);
  \node (a) at (-0.4,1) {$i$};
  \node (b) at (-1,0) {$j$};
  \node (c) at (1.1,0) {$k$};
  \node (m) at (2.4,0.4) {$b$};
  \node (n) at (-2.4,0.4) {$a$};
  \node (k) at (0,-2.6) {$p$};
  \end{scope}
  \node (=) at (0,0) {$=\theta(i,j,k)$};
    \begin{scope}[xshift=4.7cm, yshift=-0.5]
  \node (<) at (-2.6,0.2) {$\big\langle$};
  \node (>) at (2.6,0.2) {$\big\rangle$};
  \draw[line width=0.5mm] (0,1)--(0,2);
  \draw[line width=0.5mm] (0.86,-0.5)--(1.73,-1);
  \draw[line width=0.5mm] (-0.86,-0.5)--(-1.73,-1);
  \draw[line width=0.5mm] (1,0) arc(0:360:1);
  \node (a) at (-0.1,2.4) {$i$};
  \node (b) at (-2,-1.2) {$k$};
  \node (c) at (2,-1.2) {$j$};
  \node (m) at (1.6,0.2) {$a$};
  \node (n) at (-1.5,0.2) {$b$};
  \node (k) at (0,-1.5) {$p$};
  \end{scope}
\end{tikzpicture}
\]
\end{proof}

In addition, $H_\mathcal{C}$ is equipped with a positive inner product $(x,y)=\langle\chi,x^*y\rangle$ for $x,y\in H_\mathcal{C}$.
Here $\chi\in H_\mathcal{C}^*$ is
\[
\begin{tikzpicture}[scale=0.5]
  \node (chi) at (0,-0.2) {$\chi=\sum\limits_{a,b,i}\frac{1}{\sqrt{d_i}}\frac{d_b}{d_a}$};
  \begin{scope}[xshift=4.3cm]
  \draw[line width=0.5mm] (-1,0)--(1,0);
  \draw[line width=0.5mm] (-1,1)--(-1,-1);
  \draw[line width=0.5mm] (1,1)--(1,-1);
  \node (a) at (-1.3,1.3) {$a$};
  \node (b) at (-1.3,-1.3) {$b$};
  \node (c) at (1.3,1.3) {$a$};
  \node (d) at (1.3,-1.3) {$b$};
  \node (i) at (0,0.5) {$i$};
  \node (hat) at (0,1.9) {$\wedge$};
  \end{scope}
\end{tikzpicture}
\]

With this inner product, $H_\mathcal{C}$ become a Hilbert space.
Indeed, it is easy to see that $\langle\chi,e^{ab}_{i;cd}\rangle=\langle\chi,(e^{ab}_{i;cd})^*\rangle$ is positive for any basis vector $e^{ab}_{i;cd}$ of $H_\mathcal{C}$.

\begin{example}
  An interesting and important example is the Fibonacci theory.
  In this theory, we have a unitary fusion category $\mathcal{F}$ with two simple objects $\textbf{1}$ and $\tau$.
  They are self dual and of quantum dimension $d_{\textbf{1}}=1$ and $d_\tau=\phi$. Here $\phi=\frac{1+\sqrt{5}}{2}$ is the golden ratio.
  The only nontrivial fusion rule is $\tau^2=\textbf{1}+\tau$. The nontrivial $6j$-symbols are
  $F^{\tau\tau\tau}_{\tau;\textbf{1}\textbf{1}}=\phi^{-1}$, $F^{\tau\tau\tau}_{\tau;\tau\textbf{1}}=\phi^{-\frac{1}{2}}$,
  $F^{\tau\tau\tau}_{\tau;\textbf{1}\tau}=\phi^{-\frac{1}{2}}$, $F^{\tau\tau\tau}_{\tau;\tau\tau}=-\phi^{-1}$.
  By the above definition, we have a 13 dimensional quantum groupoid $H_\mathcal{F}$ with basis
  $\{e^{\textbf{1}\textbf{1}}_{\textbf{1};\textbf{1}\textbf{1}},e^{\textbf{1}\textbf{1}}_{\textbf{1};\tau\tau},e^{\tau\tau}_{\textbf{1};\textbf{1}\textbf{1}},e^{\tau\tau}_{\textbf{1};\tau\tau},e^{\tau\tau}_{\tau;\tau\tau},e^{\tau\tau}_{\tau;\textbf{1}\tau},e^{\tau\tau}_{\tau;\tau\textbf{1}},e^{\textbf{1}\tau}_{\tau;\tau\tau},e^{\tau\textbf{1}}_{\tau;\tau\tau},e^{\tau\textbf{1}}_{\tau;\tau\textbf{1}},e^{\textbf{1}\tau}_{\tau;\textbf{1}\tau},e^{\tau\textbf{1}}_{\tau;\textbf{1}\tau},e^{\textbf{1}\tau}_{\tau;\tau\textbf{1}}\}$.
  By unitarity, it is a direct sum of two matrix algebras $H_\mathcal{F}=M_{2\times2}\oplus M_{3\times3}$.
  This algebra is actually $TLJ_4(ie^{\frac{2\pi i}{20}})$, the Temperley-Lieb-Jones algebra at the 5th root of unity.
  In fact, as an algebra, $H_\mathcal{F}$ is generated by
  \begin{eqnarray*}
    e_0&=&\phi e^{\textbf{1}\textbf{1}}_{\textbf{1};\textbf{1}\textbf{1}}+\phi^{\frac{3}{2}}e^{\textbf{1}\tau}_{\tau;\textbf{1}\tau},\\
    e_1&=&\phi^{-1}e^{\textbf{1}\textbf{1}}_{\textbf{1};\textbf{1}\textbf{1}}+\phi^{-\frac{1}{2}}e^{\textbf{1}\textbf{1}}_{\textbf{1};\tau\tau}+\phi^{-\frac{1}{2}}e^{\tau\tau}_{\textbf{1};\textbf{1}\textbf{1}}+e^{\tau\tau}_{\textbf{1};\tau\tau}+\phi^{-\frac{1}{2}}e^{\textbf{1}\tau}_{\tau;\textbf{1}\tau}+e^{\textbf{1}\tau}_{\tau;\tau\textbf{1}}+e^{\tau\textbf{1}}_{\tau;\textbf{1}\tau}+\phi^{\frac{1}{2}}e^{\tau\textbf{1}}_{\tau;\tau\textbf{1}},\\
    e_2&=&\phi e^{\textbf{1}\textbf{1}}_{\textbf{1};\textbf{1}\textbf{1}}+\phi^{\frac{1}{2}}e^{\tau\textbf{1}}_{\tau;\tau\textbf{1}}+e^{\tau\textbf{1}}_{\tau;\tau\tau}+e^{\tau\tau}_{\tau;\tau\textbf{1}}+\phi^{-\frac{1}{2}}e^{\tau\tau}_{\tau;\tau\tau}.
  \end{eqnarray*}
  They satisfy the relations $e_k^2=\phi e_k$ for $k=1,2,3$ and $e_0e_1e_0=e_0$, $e_1e_0e_1=e_1$, $e_1e_2e_1=e_1$, $e_2e_1e_2=e_2$, $e_0e_2=e_2e_0$ and $\phi^2e_1+\phi(e_0+e_2)-\phi(e_0e_1+e_1e_0+e_1e_2+e_2e_1)-\phi^2e_0e_2+e_0e_1e_2+e_2e_1e_0+\phi(e_1e_0e_2+e_2e_0e_1)-e_1e_0e_2e_1=1$.
\end{example}

If we start with the representation category $Rep(Q)$ of a general $C^*$-quantum groupoid $Q$, we can not expect that $H_{Rep(Q)}\cong Q$.
The following example provides a counter example.
\begin{example}
  Let $\mathcal{S}$ be the representation category of the symmetry group $S_3$. Then $\mathcal{S}$ has 3 simple objects $\textbf{1},\sigma,\psi$ of quantum dimensions 1,1,2 with nontrivial fusion rules $\sigma^2=\textbf{1}$, $\sigma\psi=\psi\sigma=\psi$ and $\psi^2=\textbf{1}+\sigma+\psi$.
  One can easily check that $H_\mathcal{S}=M_{3\times3}\oplus M_{3\times3}\oplus M_{5\times5}$. Hence $H_\mathcal{S}$ is not isomorphic to $\mathbb{C}[S_3]$.
\end{example}

\subsection{Representation theory of $H_{\mathcal{C}}$}

For a unitary fusion category $\mathcal{C}$, we can explicitly construct all simple representations for $H_{\mathcal{C}}$.
Let $V_i=span\{v^{ab}_i~|~i,a,b~\text{admissible}\}$. By graph, the basis vectors are presented by
\[
\begin{tikzpicture}[scale=0.5]
  \node (v) at (0,0) {$v^{ab}_i=$};
  \begin{scope}[xshift=2.5cm]
    \draw[line width=0.5mm] (0,0)--(1,0);
    \draw[line width=0.5mm] (0,0)--(-0.5,1);
    \draw[line width=0.5mm] (0,0)--(-0.5,-1);
    \node (a) at (-0.8,1.3) {$a$};
    \node (b) at (-0.8,-1.3) {$b$};
    \node (i) at (0.8,0.4) {$i$};
  \end{scope}
\end{tikzpicture}
\]
The $H_\mathcal{C}$-module structure on $V_i$ is given by
\[
\begin{tikzpicture}[scale=0.5]
\begin{scope}[xshift=-8cm]
  \draw[line width=0.5mm] (-1,0)--(1,0);
  \draw[line width=0.5mm] (-1,1)--(-1,-1);
  \draw[line width=0.5mm] (1,1)--(1,-1);
  \node (a) at (-1.3,1.3) {$a$};
  \node (b) at (-1.3,-1.3) {$b$};
  \node (c) at (1.3,1.3) {$c$};
  \node (d) at (1.3,-1.3) {$d$};
  \node (i) at (0,0.5) {$j$};
\end{scope}
  \begin{scope}[xshift=-4.2cm]
    \node (<) at (-1.7,0) {$\Bigg($};
    \node (>) at (1.7,0) {$\Bigg)$};
    \draw[line width=0.5mm] (0,0)--(1,0);
    \draw[line width=0.5mm] (0,0)--(-0.5,1);
    \draw[line width=0.5mm] (0,0)--(-0.5,-1);
    \node (a) at (-0.8,1.3) {$p$};
    \node (b) at (-0.8,-1.3) {$q$};
    \node (i) at (0.8,0.4) {$i$};
  \end{scope}
  \node (v) at (0,0) {$=\frac{\delta_{i,j}\delta_{c,p}\delta_{d,q}}{\sqrt{d_i}}$};
  \begin{scope}[xshift=3.5cm]
    \draw[line width=0.5mm] (0,0)--(1,0);
    \draw[line width=0.5mm] (0,0)--(-0.5,1);
    \draw[line width=0.5mm] (0,0)--(-0.5,-1);
    \node (a) at (-0.8,1.3) {$a$};
    \node (b) at (-0.8,-1.3) {$b$};
    \node (i) at (0.8,0.4) {$i$};
  \end{scope}
\end{tikzpicture}
\]
It is easy to check that the subalgebra $H^i_\mathcal{C}=span\{e^{ab}_{i;cb}~|~a,b,c,d,i ~\text{simple objects}\}\cong End(V_i)$. By counting the dimensions and unitarity of $H_\mathcal{C}$,
we have $H_\mathcal{C}=\oplus_iH^i_\mathcal{C}\cong \oplus_iEnd(V_i)$ and $V_i$'s are all simple representations of $H_\mathcal{C}$.

\begin{prop}
  The target counital subalgebra of $H_\mathcal{C}$ is given by
  \[
  \begin{tikzpicture}[scale=0.5]
    \node (Ht) at (0,0) {$H_t(H_\mathcal{C})=span_a$};
  \begin{scope}[xshift=4.5cm]
  \node (sum) at (0,0) {$\Bigg\{\sum\limits_{k,b}\frac{\sqrt{d_k}}{\sqrt{d_a}}$};
  \node (>) at (5.7,0) {$\Bigg\}$};
  \begin{scope}[xshift=3.5cm]
  \draw[line width=0.5mm] (-1,0)--(1,0);
  \draw[line width=0.5mm] (-1,1)--(-1,-1);
  \draw[line width=0.5mm] (1,1)--(1,-1);
  \node (a) at (-1.3,1.3) {$a$};
  \node (b) at (-1.3,-1.3) {$b$};
  \node (c) at (1.3,1.3) {$a$};
  \node (d) at (1.3,-1.3) {$b$};
  \node (i) at (0,0.5) {$k$};
  \end{scope}
  \end{scope}
  \end{tikzpicture}
  \]
  As vector spaces, $dim_\mathbb{C}H_t(H_\mathcal{C})=dim_\mathbb{C}V_\textbf{1}$. So $H_t(H_\mathcal{C})\cong V_\textbf{1}$ is simple.
\end{prop}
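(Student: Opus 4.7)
The plan is to prove the statement in three stages: identify a natural spanning set for $H_t(H_\mathcal{C})$, show it exhausts the image of $\varepsilon_t$, and then match the resulting subalgebra with $V_\mathbf{1}$ as an $H_\mathcal{C}$-module.

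First, set $z_a := \sum_{k,b}\tfrac{\sqrt{d_k}}{\sqrt{d_a}}\, e^{ab}_{k;ab}$. A direct calculation with the multiplication rule gives $z_a z_{a'} = \delta_{a,a'}\, d_a^{-1/2}\, z_a$, so the $\sqrt{d_a}\,z_a$ are pairwise orthogonal idempotents summing to $\eta = \sum_a \sqrt{d_a}\,z_a$. In particular the $z_a$ are linearly independent, and they span a commutative semisimple subalgebra of dimension $|\mathrm{Irr}(\mathcal{C})|$.

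Second, to establish the explicit formula for $H_t$, I compute $\varepsilon_t$ on a general basis element $e^{a_0 b_0}_{i_0;c_0 d_0}$ using $\varepsilon_t(h) = \varepsilon(\eta_{(1)}h)\eta_{(2)}$. Expanding $\Delta(\eta)$ via Lemma~\ref{prop:comultiplication}, the counit $\varepsilon$ enforces that the first tensor factor has matching outer labels and trivial rung, which forces the auxiliary index $j=\mathbf{1}$ (and in turn $p=q=a$), collapsing both symmetric $6j$-symbols to bubble values with one trivial leg. Combining this with the multiplication rule for $\eta_{(1)}\cdot e^{a_0 b_0}_{i_0;c_0 d_0}$ yields a scalar multiple of $z_{a_0}$, so $H_t \subseteq \mathrm{span}\{z_a\}$. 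Taking $h = z_a$ itself and using that $\varepsilon_t$ is a projection onto $H_t$, the same calculation shows $\varepsilon_t(z_a) = z_a$, giving the reverse inclusion.

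Third, to establish $H_t(H_\mathcal{C}) \cong V_\mathbf{1}$, note that admissibility of the triple $(a,b,\mathbf{1})$ combined with self-duality forces $b=a$, so $V_\mathbf{1}$ has the basis $\{v^{aa}_\mathbf{1}\}_{a\in\mathrm{Irr}(\mathcal{C})}$ and its dimension equals that of $H_t$. Define $\Phi\colon V_\mathbf{1} \to H_t$ by $v^{aa}_\mathbf{1}\mapsto \sqrt{d_a}\,z_a$. Because the $V_i$-action kills any $e^{a_1 b_1}_{k;c_1 d_1}$ with $k\neq \mathbf{1}$, intertwining need only be checked for basis elements $e^{a_1 a_1}_{\mathbf{1};c_1 c_1}$; computing $e^{a_1 a_1}_{\mathbf{1};c_1 c_1}\cdot z_a$ directly from the multiplication rule (then applying $\varepsilon_t$) matches the explicit $V_\mathbf{1}$-action formula recorded in the subsection. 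Since $V_\mathbf{1}$ is simple among the $V_i$'s classified above and $\Phi$ is nonzero, it is an isomorphism, and $H_t(H_\mathcal{C})\cong V_\mathbf{1}$ is simple.

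The main obstacle is the second step: cleanly tracking the normalization factors emerging when the two symmetric $6j$-symbols of $\Delta(\eta)$ collapse under the counit constraint $j=\mathbf{1}$, and confirming that the scalar in front of $z_{a_0}$ assembles into the $\sqrt{d_k}/\sqrt{d_a}$ pattern appearing in the statement. The module-theoretic third step is then essentially a direct match against the explicit formulas already recorded earlier in the section.
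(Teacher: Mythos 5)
Your proposal is correct and follows essentially the same route as the paper, whose entire proof is the one-line remark that the formula ``follows a direct calculation using the definition of the target counital subalgebra'': your computation of $\varepsilon_t(e^{a_0b_0}_{i_0;c_0d_0})=\delta_{a_0,b_0}\delta_{c_0,d_0}\delta_{i_0,\mathbf{1}}\sqrt{d_{a_0}}\,z_{a_0}$ together with $\varepsilon_t(z_a)=z_a$ is exactly that calculation carried out. Your third step actually does more than the paper, which justifies $H_t(H_\mathcal{C})\cong V_{\mathbf{1}}$ only by matching dimensions; exhibiting the explicit intertwiner $v^{aa}_{\mathbf{1}}\mapsto\sqrt{d_a}\,z_a$ and checking equivariance on the elements $e^{a_1a_1}_{\mathbf{1};aa}$ closes that small gap.
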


\begin{proof}
  It follows a direct calculation using the definition of the target counital subalgebra.
\end{proof}

Let us compute the fusion rule for $V_i$'s. We first observe that the tensor product
$$V_i\otimes V_j=\Delta(\eta)\cdot(V_i\otimes_\mathbb{C}V_j)=span\{v^{ab}_i\otimes v^{bc}_j~|~a,b,c,i,j ~\text{admissible}\}$$
In deed, it is easy to check this by computing $\Delta(\eta)$. The following proposition gives the Clebsch-Gordan coefficients for $V_j\otimes V_k$ and its decomposition into the direct sum of simple representations.
\begin{prop}
  The subspace of $V_j\otimes V_k$ spanned by
  \[
  \begin{tikzpicture}[scale=0.5]
  \node (v) at (0,0) {$u^{ab}_i=\sum\limits_{j,k}\sum\limits_p F^{ajk}_{b,ip}$};
  \begin{scope}[xshift=6cm,yshift=0.3cm]
  \begin{scope}[xshift=-1.7cm]
    \draw[line width=0.5mm] (0,0)--(1,0);
    \draw[line width=0.5mm] (0,0)--(-0.5,1);
    \draw[line width=0.5mm] (0,0)--(-0.5,-1);
    \node (a) at (-0.8,1.3) {$a$};
    \node (b) at (-0.8,-1.3) {$p$};
    \node (i) at (0.8,0.4) {$j$};
  \end{scope}
  \node (tensor) at (0,0) {$\otimes$};
  \begin{scope}[xshift=1.7cm]
    \draw[line width=0.5mm] (0,0)--(1,0);
    \draw[line width=0.5mm] (0,0)--(-0.5,1);
    \draw[line width=0.5mm] (0,0)--(-0.5,-1);
    \node (a) at (-0.8,1.3) {$p$};
    \node (b) at (-0.8,-1.3) {$b$};
    \node (i) at (0.8,0.4) {$k$};
  \end{scope}
  \end{scope}
  \end{tikzpicture}
  \]
  where the first summation runs over all $j,k$ such that $i,j,k$ are admissible, is isomorphic to $V_i$. Therefore, by counting the dimensions, we have
  $$V_j\otimes V_k\cong\bigoplus\limits_i V_i, ~~~\text{for} ~i,j,k ~\text{admissible}.$$
\end{prop}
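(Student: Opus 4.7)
The plan is to define a linear map $\Phi_i \colon V_i \to V_j \otimes V_k$ by $v^{ab}_i \mapsto u^{ab}_i$, verify that it is an injective $H_\mathcal{C}$-module homomorphism whose image is the claimed subspace, and conclude the direct-sum decomposition by a dimension count. First, note that $u^{ab}_i$ is nonzero only when $(i,a,b)$ is admissible, so the source of $\Phi_i$ really is $V_i$, and $u^{ab}_i$ does lie in $V_j \otimes V_k = \Delta(\eta)\cdot(V_j \otimes_\mathbb{C} V_k)$ since the sum over $p$ precisely enforces the admissibility conditions $(a,p,j)$ and $(p,b,k)$.

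Linear independence of the family $\{u^{ab}_i\}$ as $i$ ranges over labels admissible with $(j,k)$ (for fixed admissible $a,b$) follows from the unitarity of the $F$-matrix $F^{ajk}_b$: its entries $F^{ajk}_{b,ip}$ are exactly the transition coefficients between the orthogonal spanning set $\{v^{ap}_j \otimes v^{pb}_k\}_p$ of the $(a,b)$-isotypic component of $V_j \otimes V_k$ and the vectors $\{u^{ab}_i\}_i$. The heart of the argument is verifying that $\Phi_i$ intertwines the $H_\mathcal{C}$-action, which it suffices to check on a general basis element $e^{cd}_{\ell;c'd'}$. I would expand $\Delta(e^{cd}_{\ell;c'd'})$ via Lemma \ref{prop:comultiplication}, apply each tensor factor to the corresponding factor of $v^{ap}_j \otimes v^{pb}_k$, and sum against $F^{ajk}_{b,ip}$. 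The Kronecker deltas coming from the module actions on $V_j$ and $V_k$ collapse the internal labels to $j'=j$, $k'=k$, $c'=a$, $d'=b$, $q'=p$, leaving a double sum over $p,p'$ of a product of three symmetric $6j$-symbols (two from the coproduct, one from $u^{ab}_i$) weighted by $\sqrt{d}$-factors. After converting the $F$-symbol to the symmetric $G$-form via $G^{abc}_{kmn} = \sqrt{d_m d_k/d_c}\,F^{amk}_{b;cn}$, the $p$-summation falls precisely under the hypotheses of Lemma~1, whose pentagon-type identity reduces the triple product to a single symmetric $6j$-symbol, producing the expected coefficient of $v^{cp'}_j \otimes v^{p'd}_k$ in $u^{cd}_i$ together with a delta $\delta_{i,\ell}$.

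The main obstacle is this $6j$-symbol computation: the labels of Lemma~1 must be matched carefully to those produced by the coproduct of $e^{cd}_{\ell;c'd'}$ and by the definition of $u^{ab}_i$, and the $\sqrt{d}$-prefactors must conspire to cancel, leaving the equality $e^{cd}_{\ell;c'd'}\cdot u^{ab}_i = \tfrac{\delta_{i,\ell}\delta_{c',a}\delta_{d',b}}{\sqrt{d_i}}\, u^{cd}_i$ that matches the known action on $V_i$. Once the intertwining is established, the direct-sum decomposition $V_j \otimes V_k \cong \bigoplus_i V_i$ (over $i$ admissible with $(j,k)$) follows from a dimension count: $\dim(V_j \otimes V_k)$ equals the number of admissible triples $(a,p,b)$ with $(a,p,j)$ and $(p,b,k)$ admissible, while $\sum_i \dim V_i$ (summed over $i$ admissible with $(j,k)$) counts pairs $(a,b)$ together with an $i$ admissible with both $(a,b)$ and $(j,k)$; the two counts agree by associativity of fusion $a \otimes (j \otimes k) \cong (a \otimes j) \otimes k$.
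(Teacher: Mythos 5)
Your overall strategy (define $\Phi_i\colon v^{ab}_i\mapsto u^{ab}_i$, verify it intertwines the $H_\mathcal{C}$-action on basis elements via the coproduct formula, then count dimensions) is exactly the paper's, and the dimension count at the end is fine. But the key collapsing step is attributed to the wrong identity, and as described it cannot work. When you expand $\Delta(e^{cd}_{\ell;ab})$ and apply it to $u^{ab}_i=\sum_p F^{ajk}_{b,ip}\,v^{ap}_j\otimes v^{pb}_k$, the Kronecker deltas from the module actions leave a double sum over $p,p'$ in which the internal label $p$ appears in only \emph{two} of the three $6j$-symbols (the third coproduct symbol carries $p'$). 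So the $p$-summation is a sum of a product of two symbols, not three, and it does not match the shape of the pentagon identity of Lemma 1, whose left side is a sum over an index appearing in all three factors. More decisively, the right-hand side of Lemma 1 is a product of \emph{two} $6j$-symbols with a ratio of quantum dimensions and contains no Kronecker delta, whereas the computation must produce the factor $\delta_{i,\ell}$ that kills $e^{cd}_{\ell;ab}\cdot u^{ab}_i$ for $\ell\neq i$. No application of the pentagon alone yields that delta.

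The identity actually needed is the unitarity/orthogonality of the $F$-matrices, $F^{ajk}_b\,(F^{bkj}_a)^\dagger=I$, i.e.\ $\sum_p F^{bkj}_{a,\ell p}F^{ajk}_{b,ip}=\delta_{\ell i}$. This is how the paper proceeds: it first rewrites the coproduct purely in terms of $F$-symbols (equation \eqref{eqn:Fcomultiplication}, derived by removing the two bubbles with $F$-moves rather than by quoting the symmetric-$6j$ form of Lemma \ref{prop:comultiplication}), so that the $p$-sum becomes $\sum_p F^{bkj}_{a,\ell p}F^{ajk}_{b,ip}$ and collapses to $\delta_{\ell i}$ by unitarity, leaving the single remaining $F^{cjk}_{d,ip'}$ that reassembles $u^{cd}_i$ with the coefficient $\tfrac{1}{\sqrt{d_i}}$. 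Your proof would be repaired by replacing the appeal to Lemma 1 with this orthogonality relation (or equivalently the bubble-removal/completeness relation for theta-normalized symmetric $6j$-symbols); everything else in your outline then goes through.
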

\begin{proof}
We first calculate the coproduct using $F$-symbols. 
\[
  \begin{tikzpicture}[scale=0.5]
    \begin{scope}[xshift=-6.5cm]             
    \node (<) at (-4.3,0) {$\Bigg\langle$};
    \node (>) at (4.3,0) {$\Bigg\rangle$};
    \begin{scope}[xshift=-2cm]
  \draw[line width=0.5mm] (-1,0)--(1,0);
  \draw[line width=0.5mm] (-1,1)--(-1,-1);
  \draw[line width=0.5mm] (1,1)--(1,-1);
  \node (a) at (-1.3,1.3) {$a$};
  \node (b) at (-1.3,-1.3) {$b$};
  \node (c) at (1.3,1.3) {$c$};
  \node (d) at (1.3,-1.3) {$d$};
  \node (i) at (0,0.5) {$i$};
  \node (hat) at (0,1.9) {$\wedge$};
    \end{scope}
    \node (comma) at (0,-0.8) {$,$};
    \begin{scope}[xshift=2cm]
  \draw[line width=0.5mm] (-1,0.5)--(1,0.5);
  \draw[line width=0.5mm] (-1,-0.5)--(1,-0.5);
  \draw[line width=0.5mm] (-1,1.5)--(-1,-1.5);
  \draw[line width=0.5mm] (1,1.5)--(1,-1.5);
  \node (a) at (-1.3,1.8) {$a$};
  \node (b) at (-1.3,-1.8) {$b$};
  \node (c) at (1.3,1.8) {$c$};
  \node (d) at (1.3,-1.8) {$d$};
  \node (p) at (-1.3,0) {$p$};
  \node (q) at (1.3,0) {$q$};
  \node (j) at (0,1) {$j$};
  \node (k) at (0,-1) {$k$};
    \end{scope}
  \end{scope}
  \node (=) at (0,0) {$=\frac{\sqrt{d_i}}{\sqrt{d_jd_k}}$};
  \begin{scope}[xshift=6.3cm]             
  \node (<) at (-4.3,0) {$\Bigg\langle$};
    \node (>) at (5.5,0) {$\Bigg\rangle$};
    \begin{scope}[xshift=-2cm]
  \draw[line width=0.5mm] (-1,0)--(1,0);
  \draw[line width=0.5mm] (-1,1)--(-1,-1);
  \draw[line width=0.5mm] (1,1)--(1,-1);
  \node (a) at (-1.3,1.3) {$a$};
  \node (b) at (-1.3,-1.3) {$b$};
  \node (c) at (1.3,1.3) {$c$};
  \node (d) at (1.3,-1.3) {$d$};
  \node (i) at (0,0.5) {$i$};
  \node (hat) at (0,1.9) {$\wedge$};
    \end{scope}
    \node (comma) at (0,-0.8) {$,$};
    \begin{scope}[xshift=2.6cm]
  \draw[line width=0.5mm] (-1.5,0.7)--(-0.5,0);
  \draw[line width=0.5mm] (-1.5,-0.7)--(-0.5,0);
  \draw[line width=0.5mm] (1.5,-0.7)--(0.5,0);
  \draw[line width=0.5mm] (1.5,0.7)--(0.5,0);
  \draw[line width=0.5mm] (-1.5,1.5)--(-1.5,-1.5);
  \draw[line width=0.5mm] (1.5,1.5)--(1.5,-1.5);
  \draw[line width=0.5mm] (0.5,0)--(-0.5,0);
  \node (a) at (-1.8,1.8) {$a$};
  \node (b) at (-1.8,-1.8) {$b$};
  \node (c) at (1.8,1.8) {$c$};
  \node (d) at (1.8,-1.8) {$d$};
  \node (p) at (-1.8,0) {$p$};
  \node (q) at (1.8,0) {$q$};
  \node (j) at (0.8,0.8) {$j$};
  \node (j) at (-0.8,0.8) {$j$};
  \node (k) at (0.8,-0.8) {$k$};
  \node (k) at (-0.8,-0.8) {$k$};
  \node (i) at (0,0.5) {$i$};
    \end{scope}
  \end{scope}
  \end{tikzpicture}
  \]
  After removing the bubbles by two $F$-moves, we obtain a formula for coproduct in terms of $F$-symbols.
  \begin{equation}\label{eqn:Fcomultiplication}
  \begin{split}
  \begin{tikzpicture}[scale=0.5]
    \begin{scope}[xshift=-6.2cm]      
    \node (delta) at (-3,0) {$\Delta$};
    \node (<) at (-2.2,0) {$\Bigg($};
    \node (>) at (2.2,0) {$\Bigg)$};
    \draw[line width=0.5mm] (-1,0)--(1,0);
    \draw[line width=0.5mm] (-1,1)--(-1,-1);
    \draw[line width=0.5mm] (1,1)--(1,-1);
    \node (a) at (-1.3,1.3) {$a$};
    \node (b) at (-1.3,-1.3) {$b$};
    \node (c) at (1.3,1.3) {$c$};
    \node (d) at (1.3,-1.3) {$d$};
    \node (i) at (0,0.5) {$i$};
    \end{scope}
    \node (=) at (0,-0.4) {$=\sum\limits_{\substack{j,k\\p,q}}\frac{\sqrt{d_jd_k}}{\sqrt{d_i}}F^{ajk}_{b,ip}F^{dkj}_{c,iq}$};
    \begin{scope}[xshift=7.6cm]  
      \begin{scope}[xshift=-2.2cm]
    \draw[line width=0.5mm] (-1,0)--(1,0);
    \draw[line width=0.5mm] (-1,1)--(-1,-1);
    \draw[line width=0.5mm] (1,1)--(1,-1);
    \node (a) at (-1.3,1.3) {$a$};
    \node (b) at (-1.3,-1.3) {$p$};
    \node (c) at (1.3,1.3) {$c$};
    \node (d) at (1.3,-1.3) {$q$};
    \node (i) at (0,0.5) {$j$};
      \end{scope}
      \node (tensor) at (0,0) {$\otimes$};
      \begin{scope}[xshift=2.2cm]
    \draw[line width=0.5mm] (-1,0)--(1,0);
    \draw[line width=0.5mm] (-1,1)--(-1,-1);
    \draw[line width=0.5mm] (1,1)--(1,-1);
    \node (a) at (-1.3,1.3) {$p$};
    \node (b) at (-1.3,-1.3) {$b$};
    \node (c) at (1.3,1.3) {$q$};
    \node (d) at (1.3,-1.3) {$d$};
    \node (i) at (0,0.5) {$k$};
      \end{scope}
    \end{scope}
  \end{tikzpicture}
  \end{split}
  \end{equation}
  \noindent
  where $j,k$ must be admissible with $i$.
Using \eqref{eqn:Fcomultiplication}, we can end up the proof by checking that $e^{ab}_{i;cd}(u^{pq}_j)=\frac{\delta_{i,j}\delta_{c,p}\delta_{d,q}}{\sqrt{d_i}}u^{ab}_i$.
\[
  \begin{tikzpicture}[scale=0.5]
  \begin{scope}[xshift=-16.2cm]      
  \node (delta) at (-2.6,0) {$\Delta$};
  \node (<) at (-2,0) {$\Bigg($};
  \node (>) at (2,0) {$\Bigg)$};
  \draw[line width=0.5mm] (-1,0)--(1,0);
  \draw[line width=0.5mm] (-1,1)--(-1,-1);
  \draw[line width=0.5mm] (1,1)--(1,-1);
  \node (a) at (-1.3,1.3) {$a$};
  \node (b) at (-1.3,-1.3) {$b$};
  \node (c) at (1.3,1.3) {$a$};
  \node (d) at (1.3,-1.3) {$b$};
  \node (i) at (0,0.5) {$i$};
  \end{scope}
  \begin{scope}[xshift=-7.5cm]
  \node (<) at (-4.7,0) {$\Bigg(\sum\limits_{j,k,p}F^{ajk}_{b,ip}$};
  \node (>) at (3.1,0) {$\Bigg)$};
  \begin{scope}[xshift=-1.7cm]
    \draw[line width=0.5mm] (0,0)--(1,0);
    \draw[line width=0.5mm] (0,0)--(-0.5,1);
    \draw[line width=0.5mm] (0,0)--(-0.5,-1);
    \node (a) at (-0.8,1.3) {$a$};
    \node (b) at (-0.8,-1.3) {$p$};
    \node (i) at (0.8,0.4) {$j$};
  \end{scope}
  \node (tensor) at (0,0) {$\otimes$};
  \begin{scope}[xshift=1.5cm]
    \draw[line width=0.5mm] (0,0)--(1,0);
    \draw[line width=0.5mm] (0,0)--(-0.5,1);
    \draw[line width=0.5mm] (0,0)--(-0.5,-1);
    \node (a) at (-0.8,1.3) {$p$};
    \node (b) at (-0.8,-1.3) {$b$};
    \node (i) at (0.8,0.4) {$k$};
  \end{scope}
  \end{scope}
  \node (=) at (0,-0.4) {$=\sum\limits_{\substack{j,k\\q}}\frac{1}{\sqrt{d_i}}F^{ajk}_{b,iq}F^{bkj}_{a,ip}F^{ajk}_{b,ip}$};
  \begin{scope}[xshift=6.8cm]
  \begin{scope}[xshift=-1.7cm]
    \draw[line width=0.5mm] (0,0)--(1,0);
    \draw[line width=0.5mm] (0,0)--(-0.5,1);
    \draw[line width=0.5mm] (0,0)--(-0.5,-1);
    \node (a) at (-0.8,1.3) {$a$};
    \node (b) at (-0.8,-1.3) {$q$};
    \node (i) at (0.8,0.4) {$j$};
  \end{scope}
  \node (tensor) at (0,0) {$\otimes$};
  \begin{scope}[xshift=1.7cm]
    \draw[line width=0.5mm] (0,0)--(1,0);
    \draw[line width=0.5mm] (0,0)--(-0.5,1);
    \draw[line width=0.5mm] (0,0)--(-0.5,-1);
    \node (a) at (-0.8,1.3) {$q$};
    \node (b) at (-0.8,-1.3) {$b$};
    \node (i) at (0.8,0.4) {$k$};
  \end{scope}
  \end{scope}
  \end{tikzpicture}
  \]
  Note that $F^{ajk}_b(F^{bkj}_a)^\dag=I$. We have $e^{ab}_{i;ab}(u^{ab}_i)=\frac{1}{\sqrt{d_i}}u^{ab}_i$. The other cases follow by similar calculation.
\end{proof}

\begin{remark}
  If $\mathcal{C}$ is not multiplicity free, one can show in a similar way that the fusion rule for $H_\mathcal{C}$-modules is as the same as $\mathcal{C}$'s, i.e.,
  $$V_j\otimes V_k\cong\bigoplus\limits_i N^{i}_{jk}V_i, ~~~\text{for} ~i,j,k ~\text{admissible}.$$
\end{remark}

Note that the linear map given by $v^{ab}_i\mapsto u^{ab}_i$ is a basis vector of $Hom(V_i,V_j\otimes V_k)$. This gives rise to the $6j$-symbols for the representation category of $H_\mathcal{C}$. Explicitly, we have the followings bases for $Hom(V_d,(V_a\otimes V_b)\otimes V_c)$ and $Hom(V_d,V_a\otimes (V_b\otimes V_c))$:

\[
\begin{tikzpicture}[scale=0.5]    
  \begin{scope}[xshift=-2.5cm]
    \draw[line width=0.5mm] (0,0)--(-2,2);
    \draw[line width=0.5mm] (0,0)--(2,2);
    \draw[line width=0.5mm] (-1,1)--(0,2);
    \draw[line width=0.5mm] (0,0)--(0,-1);
    \node (a) at (-2,2.4) {$V_a$};
    \node (b) at (0,2.5) {$V_b$};
    \node (c) at (2,2.4) {$V_c$};
    \node (d) at (0,-1.4) {$V_d$};
    \node (m) at (-1,0.3) {$V_m$};
  \end{scope}
  \node (=) at (0,0.3) {$:$};
  \begin{scope}[xshift=1.5cm,yshift=0.3cm]
    \draw[line width=0.5mm] (0,0)--(1,0);
    \draw[line width=0.5mm] (0,0)--(-0.5,1);
    \draw[line width=0.5mm] (0,0)--(-0.5,-1);
    \node (a) at (-0.8,1.3) {$i$};
    \node (b) at (-0.8,-1.3) {$j$};
    \node (i) at (0.8,0.4) {$d$};
  \end{scope}
  \node (->) at (5.8,0.3) {$\mapsto\sum\limits_{p,q}F^{imc}_{j,dp}F^{iab}_{p,mq}$};
  \begin{scope}[xshift=11.3cm,yshift=0.3cm]
  \begin{scope}[xshift=-1.5cm]
    \draw[line width=0.5mm] (0,0)--(1,0);
    \draw[line width=0.5mm] (0,0)--(-0.5,1);
    \draw[line width=0.5mm] (0,0)--(-0.5,-1);
    \node (a) at (-0.8,1.3) {$i$};
    \node (b) at (-0.8,-1.3) {$q$};
    \node (i) at (0.8,0.4) {$a$};
  \end{scope}
  \node (tensor) at (0,0) {$\otimes$};
  \begin{scope}[xshift=1.5cm]
    \draw[line width=0.5mm] (0,0)--(1,0);
    \draw[line width=0.5mm] (0,0)--(-0.5,1);
    \draw[line width=0.5mm] (0,0)--(-0.5,-1);
    \node (a) at (-0.8,1.3) {$q$};
    \node (b) at (-0.8,-1.3) {$p$};
    \node (i) at (0.8,0.4) {$b$};
  \end{scope}
    \node (tensor) at (3,0) {$\otimes$};
  \begin{scope}[xshift=4.5cm]
    \draw[line width=0.5mm] (0,0)--(1,0);
    \draw[line width=0.5mm] (0,0)--(-0.5,1);
    \draw[line width=0.5mm] (0,0)--(-0.5,-1);
    \node (a) at (-0.8,1.3) {$p$};
    \node (b) at (-0.8,-1.3) {$j$};
    \node (i) at (0.8,0.4) {$c$};
  \end{scope}
  \end{scope}
\end{tikzpicture}
\]

\[
\begin{tikzpicture}[scale=0.5]    
  \begin{scope}[xshift=-2.5cm]
    \draw[line width=0.5mm] (0,0)--(-2,2);
    \draw[line width=0.5mm] (0,0)--(2,2);
    \draw[line width=0.5mm] (1,1)--(0,2);
    \draw[line width=0.5mm] (0,0)--(0,-1);
    \node (a) at (-2,2.4) {$V_a$};
    \node (b) at (0,2.5) {$V_b$};
    \node (c) at (2,2.4) {$V_c$};
    \node (d) at (0,-1.4) {$V_d$};
    \node (n) at (1.2,0.3) {$V_n$};
  \end{scope}
  \node (=) at (0,0.3) {$:$};
  \begin{scope}[xshift=1.5cm,yshift=0.3cm]
    \draw[line width=0.5mm] (0,0)--(1,0);
    \draw[line width=0.5mm] (0,0)--(-0.5,1);
    \draw[line width=0.5mm] (0,0)--(-0.5,-1);
    \node (a) at (-0.8,1.3) {$i$};
    \node (b) at (-0.8,-1.3) {$j$};
    \node (i) at (0.8,0.4) {$d$};
  \end{scope}
  \node (->) at (5.8,0.3) {$\mapsto\sum\limits_{p,q}F^{ian}_{j,dq}F^{qbc}_{j,np}$};
  \begin{scope}[xshift=11.3cm,yshift=0.3cm]
  \begin{scope}[xshift=-1.5cm]
    \draw[line width=0.5mm] (0,0)--(1,0);
    \draw[line width=0.5mm] (0,0)--(-0.5,1);
    \draw[line width=0.5mm] (0,0)--(-0.5,-1);
    \node (a) at (-0.8,1.3) {$i$};
    \node (b) at (-0.8,-1.3) {$q$};
    \node (i) at (0.8,0.4) {$a$};
  \end{scope}
  \node (tensor) at (0,0) {$\otimes$};
  \begin{scope}[xshift=1.5cm]
    \draw[line width=0.5mm] (0,0)--(1,0);
    \draw[line width=0.5mm] (0,0)--(-0.5,1);
    \draw[line width=0.5mm] (0,0)--(-0.5,-1);
    \node (a) at (-0.8,1.3) {$q$};
    \node (b) at (-0.8,-1.3) {$p$};
    \node (i) at (0.8,0.4) {$b$};
  \end{scope}
    \node (tensor) at (3,0) {$\otimes$};
  \begin{scope}[xshift=4.5cm]
    \draw[line width=0.5mm] (0,0)--(1,0);
    \draw[line width=0.5mm] (0,0)--(-0.5,1);
    \draw[line width=0.5mm] (0,0)--(-0.5,-1);
    \node (a) at (-0.8,1.3) {$p$};
    \node (b) at (-0.8,-1.3) {$j$};
    \node (i) at (0.8,0.4) {$c$};
  \end{scope}
  \end{scope}
\end{tikzpicture}
\]

\noindent
By the pentagon equation $\sum\limits_mF^{abc}_{d,nm}F^{imc}_{j,dp}F^{iab}_{p,mq}=F^{ian}_{j,dq}F^{qbc}_{j,np}$ and the unitarity of $F^{abc}_{d}$, we get the $F$-moves for the representation category of $H_\mathcal{C}$.
\[
\begin{tikzpicture}[scale=0.5]    
  \begin{scope}[xshift=-4.8cm]
    \draw[line width=0.5mm] (0,0)--(-2,2);
    \draw[line width=0.5mm] (0,0)--(2,2);
    \draw[line width=0.5mm] (-1,1)--(0,2);
    \draw[line width=0.5mm] (0,0)--(0,-1);
    \node (a) at (-2,2.4) {$V_a$};
    \node (b) at (0,2.5) {$V_b$};
    \node (c) at (2,2.4) {$V_c$};
    \node (d) at (0,-1.4) {$V_d$};
    \node (m) at (-1,0.3) {$V_m$};
  \end{scope}
  \node (=) at (0,0.3) {$=\sum\limits_n\overline{F^{abc}_{d,nm}}$};
  \begin{scope}[xshift=4.8cm]
    \draw[line width=0.5mm] (0,0)--(-2,2);
    \draw[line width=0.5mm] (0,0)--(2,2);
    \draw[line width=0.5mm] (1,1)--(0,2);
    \draw[line width=0.5mm] (0,0)--(0,-1);
    \node (a) at (-2,2.4) {$V_a$};
    \node (b) at (0,2.5) {$V_b$};
    \node (c) at (2,2.4) {$V_c$};
    \node (d) at (0,-1.4) {$V_d$};
    \node (n) at (1.2,0.3) {$V_n$};
  \end{scope}
\end{tikzpicture}
\]

It is clear that the above argument can be applied to the general unitary fusion category, not relying on the assumption of being self-dual and multiplicity free and trivial Frobenius-Schur indicators. As a consequence, we have the following theorem.

\begin{theorem}
  Given a unitary fusion category $\mathcal{C}$ with real $F$-matrices, the representation category of $H_\mathcal{C}$ is equivalent to $\mathcal{C}$.
\end{theorem}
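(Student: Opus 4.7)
The plan is to assemble the pieces already established in this section and identify $Rep(H_\mathcal{C})$ with $\mathcal{C}$ via the obvious correspondence $i\mapsto V_i$ on simple objects. The earlier propositions have already done most of the work: the simple $H_\mathcal{C}$-modules are precisely the $V_i$ indexed by $Irr(\mathcal{C})$, the tensor unit $H_t(H_\mathcal{C})$ is isomorphic to $V_{\mathbf{1}}$, and the fusion rule $V_j\otimes V_k\cong\bigoplus_i N^i_{jk}V_i$ coincides with that of $\mathcal{C}$. Thus the bijection on simple objects and the matching of fusion coefficients is already in hand.

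First I would define a $\mathbb{C}$-linear functor $\mathcal{F}:\mathcal{C}\to Rep(H_\mathcal{C})$ by sending $i\in Irr(\mathcal{C})$ to $V_i$, extending by direct sums on objects, and sending the canonical basis vectors in $Hom_\mathcal{C}(j\otimes k,i)$ (given by trivalent vertices) to the corresponding embeddings $V_i\hookrightarrow V_j\otimes V_k$ built from the vectors $u^{ab}_i$ defined in the preceding proposition. Semisimplicity on both sides reduces the check that $\mathcal{F}$ is fully faithful to a comparison of dimensions of $Hom$-spaces between simples, which follows from the matching fusion rules. The tensoricity isomorphism $\mathcal{F}(X)\otimes\mathcal{F}(Y)\cong\mathcal{F}(X\otimes Y)$ on simple summands is furnished by the proposition describing $u^{ab}_i\in V_j\otimes V_k$, and the unit isomorphism is furnished by $V_{\mathbf{1}}\cong H_t(H_\mathcal{C})$.

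Next I would show that $\mathcal{F}$ respects associativity. A unitary multiplicity-free fusion category is determined (up to equivalence) by its fusion rules together with the $F$-matrices satisfying the pentagon equation, taken modulo gauge. In the computation just before the theorem, it was shown that in $Rep(H_\mathcal{C})$ the associator in the chosen bases is the matrix $\overline{F^{abc}_{d;nm}}$. Under the hypothesis that all $F$-matrices of $\mathcal{C}$ are real, $\overline{F^{abc}_{d;nm}}=F^{abc}_{d;nm}$, so the associators match on the nose. Therefore $\mathcal{F}$ intertwines the associativity constraints of $\mathcal{C}$ and $Rep(H_\mathcal{C})$, and combined with the fully-faithfulness and essential surjectivity on simple objects this yields an equivalence of tensor categories.

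The only real subtlety, and what I would expect to be the main obstacle, is the bookkeeping of gauge/normalization: the basis $u^{ab}_i$ chosen to present $V_i\hookrightarrow V_j\otimes V_k$ was defined via specific $F$-symbol coefficients, so one needs to verify that with these normalizations the associators in $Rep(H_\mathcal{C})$ really are $\overline{F^{abc}_{d;nm}}$ (as asserted in the excerpt by the pentagon identity $\sum_m F^{abc}_{d,nm}F^{imc}_{j,dp}F^{iab}_{p,mq}=F^{ian}_{j,dq}F^{qbc}_{j,np}$ and unitarity of $F^{abc}_d$), rather than some rescaled version. Once this is confirmed and the reality assumption is invoked to eliminate the complex conjugate, the equivalence $\mathcal{F}:\mathcal{C}\xrightarrow{\sim}Rep(H_\mathcal{C})$ follows at once.
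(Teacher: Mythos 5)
Your proposal is correct and follows essentially the same route as the paper: the paper's argument consists precisely of the preceding results (simple modules $V_i$ indexed by $Irr(\mathcal{C})$, $H_t(H_\mathcal{C})\cong V_{\mathbf{1}}$, matching fusion rules via the Clebsch--Gordan vectors $u^{ab}_i$, and the computation showing the associator of $Rep(H_\mathcal{C})$ is $\overline{F^{abc}_{d,nm}}$), with the reality hypothesis removing the conjugate. You merely make explicit the standard categorical scaffolding (full faithfulness by semisimplicity and dimension count, essential surjectivity, monoidal structure maps) that the paper leaves implicit.
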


This is a special case of a general theorem in \cite{ENO}.

\vspace{.1in}
{\bf Theorem.} \cite{ENO}
{\it 
  Any fusion category is equivalent to the category of finite dimensional representations of some semisimple quantum groupoid.
}

\section{Kitaev model based on $H_\mathcal{C}$}

In the following we construct a frustration-free Hamiltonian by some suitable choice of cocommutative elements in $H_\mathcal{C}$ and $H_\mathcal{C}^*$.
First, consider the following element $\Lambda$ in $H_\mathcal{C}$.
\[
\begin{tikzpicture}[scale=0.5]
  \node (Lambda) at (0,-0.4) {$\Lambda=\frac{1}{D^2}\sum\limits_{a,b}d_ad_b$};
  \begin{scope}[xshift=4.4cm]
  \draw[line width=0.5mm] (-1,1)--(-1,-1);
  \draw[line width=0.5mm] (1,1)--(1,-1);
  \draw[line width=0.5mm] (-1,0)--(1,0);
  \node (a) at (-1.3,1.3) {$a$};
  \node (b) at (-1.3,-1.3) {$a$};
  \node (c) at (1.3,1.3) {$b$};
  \node (d) at (1.3,-1.3) {$b$};
  \node (i) at (0,0.5) {$\textbf{1}$};
  \end{scope}
\end{tikzpicture}
\]

\begin{lemma}
  $\Lambda$ is cocommutative, $S$-invariant and $*$-invariant, which means that $\Delta^{op}(\Lambda)=\Delta(\Lambda)$, $S(\Lambda)=\Lambda$ and $\Lambda^*=\Lambda$.
  It is an idempotent. Moreover,
  \[
\begin{tikzpicture}[scale=0.5]
  \node (Delta) at (-2,-0.8) {$\Delta^2(\Lambda)=\frac{1}{D^2}\sum\limits_{\substack{a,c,p\\b,d,q\\i,j,k}}\sqrt{d_id_jd_k}$};
  \begin{scope}[xshift=5.5cm]
  \node (<) at (-2.6,0.2) {$\big\langle$};
  \node (>) at (2.6,0.2) {$\big\rangle$};
  \draw[line width=0.5mm] (0,1)--(0,2);
  \draw[line width=0.5mm] (0.86,-0.5)--(1.73,-1);
  \draw[line width=0.5mm] (-0.86,-0.5)--(-1.73,-1);
  \draw[line width=0.5mm] (1,0) arc(0:360:1);
  \node (a) at (-0.1,2.4) {$i$};
  \node (b) at (-2,-1.2) {$k$};
  \node (c) at (2,-1.2) {$j$};
  \node (m) at (1.6,0.2) {$c$};
  \node (n) at (-1.5,0.2) {$a$};
  \node (k) at (0,-1.5) {$p$};
  \end{scope}
  \begin{scope}[xshift=11.2cm]
  \node (<) at (-2.6,0.2) {$\big\langle$};
  \node (>) at (2.6,0.2) {$\big\rangle$};
  \draw[line width=0.5mm] (0,1)--(0,2);
  \draw[line width=0.5mm] (0.86,-0.5)--(1.73,-1);
  \draw[line width=0.5mm] (-0.86,-0.5)--(-1.73,-1);
  \draw[line width=0.5mm] (1,0) arc(0:360:1);
  \node (a) at (-0.1,2.4) {$i$};
  \node (b) at (-2,-1.2) {$j$};
  \node (c) at (2,-1.2) {$k$};
  \node (m) at (1.6,0.2) {$b$};
  \node (n) at (-1.5,0.2) {$d$};
  \node (k) at (0,-1.5) {$q$};
  \end{scope}
  \begin{scope}[xshift=18.1cm]  
    \begin{scope}[xshift=-2.2cm]
  \draw[line width=0.5mm] (-1,0)--(1,0);
  \draw[line width=0.5mm] (-1,1)--(-1,-1);
  \draw[line width=0.5mm] (1,1)--(1,-1);
  \node (a) at (-1.3,1.3) {$a$};
  \node (b) at (-1.3,-1.3) {$c$};
  \node (c) at (1.3,1.3) {$b$};
  \node (d) at (1.3,-1.3) {$d$};
  \node (i) at (0,0.5) {$i$};
    \end{scope}
    \node (tensor) at (0,0) {$\otimes$};
    \begin{scope}[xshift=2.2cm]
  \draw[line width=0.5mm] (-1,0)--(1,0);
  \draw[line width=0.5mm] (-1,1)--(-1,-1);
  \draw[line width=0.5mm] (1,1)--(1,-1);
  \node (a) at (-1.3,1.3) {$c$};
  \node (b) at (-1.3,-1.3) {$p$};
  \node (c) at (1.3,1.3) {$d$};
  \node (d) at (1.3,-1.3) {$q$};
  \node (i) at (0,0.5) {$j$};
    \end{scope}
      \node (tensor) at (4.4,0) {$\otimes$};
    \begin{scope}[xshift=6.6cm]
  \draw[line width=0.5mm] (-1,0)--(1,0);
  \draw[line width=0.5mm] (-1,1)--(-1,-1);
  \draw[line width=0.5mm] (1,1)--(1,-1);
  \node (a) at (-1.3,1.3) {$p$};
  \node (b) at (-1.3,-1.3) {$a$};
  \node (c) at (1.3,1.3) {$q$};
  \node (d) at (1.3,-1.3) {$b$};
  \node (i) at (0,0.5) {$k$};
    \end{scope}
  \end{scope}
\end{tikzpicture}
\]
where $i,j,k$ inside the summation must be admissible.
\end{lemma}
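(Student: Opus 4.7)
The five assertions split into three groups: the symmetry properties ($*$-, $S$-, and cocommutativity), the idempotent property, and the explicit formula for $\Delta^{2}(\Lambda)$. The first two groups can be checked by a direct calculation on the basis $\{e^{ab}_{i;cd}\}$, while the $\Delta^{2}$ formula will be obtained by applying Lemma~\ref{prop:comultiplication} twice and invoking the pentagon identity for $G$-symbols established in Lemma~1.

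\textbf{Symmetries and idempotence.} Starting from the formulas for $*$ and $S$ given in Section~6, one reads off
$(e^{aa}_{\mathbf{1};bb})^{*}=e^{bb}_{\mathbf{1};aa}$
and $S(e^{aa}_{\mathbf{1};bb})=\frac{\sqrt{d_{b}d_{a}}}{\sqrt{d_{a}d_{b}}}\,e^{bb}_{\mathbf{1};aa}=e^{bb}_{\mathbf{1};aa}$. Relabeling $a\leftrightarrow b$ in the double sum $\sum_{a,b}d_{a}d_{b}$ gives both $\Lambda^{*}=\Lambda$ and $S(\Lambda)=\Lambda$. For idempotence, the multiplication rule shows that the product $e^{aa}_{\mathbf{1};bb}\cdot e^{a'a'}_{\mathbf{1};b'b'}$ is nonzero only when $a'=b=b'$, and then equals $e^{aa}_{\mathbf{1};b'b'}/\sqrt{d_{\mathbf{1}}}=e^{aa}_{\mathbf{1};b'b'}$. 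Substituting and using $\sum_{b}d_{b}^{2}=D^{2}$ reduces one factor of $1/D^{2}$, so $\Lambda^{2}=\Lambda$.

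\textbf{Cocommutativity.} Apply Lemma~\ref{prop:comultiplication} to each $e^{aa}_{\mathbf{1};bb}$ with $i=\mathbf{1}$. Admissibility with $\mathbf{1}$ forces $j=k$ on nonvanishing summands, and the two symmetric $6j$-symbols appearing in $\Delta(e^{aa}_{\mathbf{1};bb})$ become tetrahedra with one edge labeled $\mathbf{1}$, whose values are given by simple $\delta$'s and square roots of quantum dimensions. Swapping the two tensor factors in the resulting expression and relabeling the dummy indices $(j,k)\leftrightarrow(k,j)$ and $a\leftrightarrow b$, and using the tetrahedral symmetry of the $G$-symbols that was established in the proof of Lemma~1, converts $\tau\circ\Delta(\Lambda)$ term-by-term into $\Delta(\Lambda)$.

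\textbf{The formula for $\Delta^{2}(\Lambda)$, and the main obstacle.} Start from $\Delta(\Lambda)$ computed via Lemma~\ref{prop:comultiplication} and apply $\Delta$ to the first tensor factor, invoking the lemma a second time. This produces four $G$-symbols and a prefactor $\sqrt{d_{i}d_{j}d_{k}}$ of the type $\sqrt{d_{*}d_{*}d_{*}}\cdot\sqrt{d_{*}d_{*}d_{*}}$, together with various $d_{*}^{-1/2}$ factors which must collapse to $1/D^{2}$ and $\sqrt{d_{i}d_{j}d_{k}}$. The principal task is to reduce the four $G$-symbols to the three displayed in the statement; this is done by combining two of them through the pentagon identity of Lemma~1, which replaces a sum over an internal label by a product of two $G$'s with the extra factor $\sqrt{d_{c}d_{d}d_{e}}/\sqrt{d_{a}d_{b}d_{q}}$, and then by tetrahedral symmetries. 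The accounting of quantum-dimension factors and the re-indexing to match the displayed labels $(a,b,c,d,p,q,i,j,k)$ is the main technical obstacle; coassociativity, which lets us apply $\Delta$ to either tensor slot, serves as a useful consistency check for the final expression.
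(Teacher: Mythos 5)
Your route to the $\Delta^{2}(\Lambda)$ formula is genuinely different from the paper's: the paper computes the coefficient of $\Delta^{2}(\Lambda)$ directly, by pairing the dual basis against a three--rung ladder and evaluating the resulting closed diagram exactly as in the proof of the comultiplication lemma, whereas you propose to iterate that lemma using coassociativity. Your route is viable --- in fact it is \emph{easier} than you anticipate --- but your description of its ``principal task'' is wrong in a way that would derail the computation. The target formula contains \emph{two} symmetric $6j$-symbols, not three. If you first compute $\Delta(\Lambda)=\frac{1}{D^{2}}\sum_{a,b,c,d,i}\sqrt{d_{a}d_{b}d_{c}d_{d}}\,e^{ac}_{i;bd}\otimes e^{ca}_{i;db}$ (the two $G$-symbols produced by the first application of the lemma carry a $\mathbf{1}$-labelled edge and degenerate to $\delta$'s and dimension factors, exactly as you note in your cocommutativity paragraph), and then apply the comultiplication lemma once more to the second tensor slot, the prefactor $\sqrt{d_{a}d_{b}d_{c}d_{d}}$ cancels identically against the denominator $\sqrt{d_{a}d_{b}d_{c}d_{d}}$ in the lemma, leaving $\frac{1}{D^{2}}\sqrt{d_{i}d_{j}d_{k}}$ times precisely the two displayed $G$-symbols. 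There is no contraction of ``four $G$-symbols to three'' to perform, and the pentagon identity of Lemma~1 is not the right tool: that identity trades a \emph{sum over an internal label} of three $G$'s for a product of two, and no such sum appears here. As written, your plan would stall at a step that does not exist; executed correctly, the same strategy finishes in two lines of bookkeeping.

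Two smaller points. In the idempotence argument the constraint you state, $a'=b=b'$, is over-determined: the multiplication rule $e^{ab}_{i;cd}\cdot e^{a'b'}_{i';c'd'}=\frac{\delta_{c,a'}\delta_{d,b'}\delta_{i,i'}}{\sqrt{d_{i}}}e^{ab}_{i';c'd'}$ applied to $e^{aa}_{\mathbf{1};bb}\cdot e^{a'a'}_{\mathbf{1};b'b'}$ imposes only $a'=b$, with $b'$ free; it is exactly this free $b'$ that lets $\sum_{b}d_{b}^{2}=D^{2}$ absorb one factor of $1/D^{2}$, so your stated constraint is inconsistent with the cancellation you then invoke. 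The symmetry checks ($*$- and $S$-invariance via relabelling $a\leftrightarrow b$) and the cocommutativity argument are fine and agree in substance with the paper, which obtains $\Delta^{op}(\Lambda)=\Delta(\Lambda)$ by inspecting the manifestly symmetric closed form of $\Delta(\Lambda)$ above.
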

\begin{proof}
  First, $S$-invariance is obvious. Next by direct calculation, we have $\Lambda^2=\Lambda$ and
 \[
\begin{tikzpicture}[scale=0.5]
  \node (Delta) at (0,-0.5) {$\Delta(\Lambda)=\frac{1}{D^2}\sum\limits_{a,b,c,d,i}\sqrt{d_ad_bd_cd_d}$};
  \begin{scope}[xshift=9cm]  
    \begin{scope}[xshift=-2.2cm]
  \draw[line width=0.5mm] (-1,0)--(1,0);
  \draw[line width=0.5mm] (-1,1)--(-1,-1);
  \draw[line width=0.5mm] (1,1)--(1,-1);
  \node (a) at (-1.3,1.3) {$a$};
  \node (b) at (-1.3,-1.3) {$c$};
  \node (c) at (1.3,1.3) {$b$};
  \node (d) at (1.3,-1.3) {$d$};
  \node (i) at (0,0.5) {$i$};
    \end{scope}
    \node (tensor) at (0,0) {$\otimes$};
    \begin{scope}[xshift=2.2cm]
  \draw[line width=0.5mm] (-1,0)--(1,0);
  \draw[line width=0.5mm] (-1,1)--(-1,-1);
  \draw[line width=0.5mm] (1,1)--(1,-1);
  \node (a) at (-1.3,1.3) {$c$};
  \node (b) at (-1.3,-1.3) {$a$};
  \node (c) at (1.3,1.3) {$d$};
  \node (d) at (1.3,-1.3) {$b$};
  \node (i) at (0,0.5) {$i$};
    \end{scope}
  \end{scope}
\end{tikzpicture}
\]
It implies that $\Delta^{op}(\Lambda)=\Delta(\Lambda)$ by symmetry. To calculate $\Delta^2(\Lambda)$, we need to evaluate the coefficient
\[
  \begin{tikzpicture}[scale=0.5]
    \begin{scope}[xshift=-6cm]             
    \node (<) at (-4.3,0) {$\Bigg\langle$};
    \node (>) at (4.3,0) {$\Bigg\rangle$};
    \begin{scope}[xshift=-2cm]
  \draw[line width=0.5mm] (-1,1)--(-1,-1);
  \draw[line width=0.5mm] (1,1)--(1,-1);
  \draw[line width=0.5mm] (-1,0)--(1,0);
  \node (a) at (-1.3,1.3) {$$};
  \node (b) at (-1.3,-1.3) {$a$};
  \node (c) at (1.3,1.3) {$$};
  \node (d) at (1.3,-1.3) {$b$};
  \node (i) at (0,0.5) {$\textbf{1}$};
  \node (hat) at (0,1.9) {$\wedge$};
    \end{scope}
    \node (comma) at (0,-0.8) {$,$};
    \begin{scope}[xshift=2cm]
  \draw[line width=0.5mm] (-1,1)--(1,1);
  \draw[line width=0.5mm] (-1,0)--(1,0);
  \draw[line width=0.5mm] (-1,-1)--(1,-1);
  \draw[line width=0.5mm] (-1,2)--(-1,-2);
  \draw[line width=0.5mm] (1,2)--(1,-2);
  \node (a) at (-1.3,2.3) {$a$};
  \node (b) at (-1.3,-2.3) {$a$};
  \node (c) at (1.3,2.3) {$b$};
  \node (d) at (1.3,-2.3) {$b$};
  \node (p) at (-1.3,0.5) {$c$};
  \node (q) at (1.3,0.5) {$d$};
  \node (p) at (-1.3,-0.6) {$p$};
  \node (q) at (1.3,-0.6) {$q$};
  \node (i) at (0,1.4) {$i$};
  \node (j) at (0,0.4) {$j$};
  \node (k) at (0,-0.6) {$k$};
    \end{scope}
  \end{scope}
  \node (=) at (0,0) {$=\frac{1}{d_ad_b}$};
  \begin{scope}[xshift=4cm,scale=1.2]
  \draw[line width=0.5mm] (0,1)--(0,2);
  \draw[line width=0.5mm] (0.86,-0.5)--(1.73,-1);
  \draw[line width=0.5mm] (-0.86,-0.5)--(-1.73,-1);
  \draw[line width=0.5mm] (1,0) arc(0:360:1);
  \draw[line width=0.5mm] (2,0) arc(0:360:2);
  \node (a) at (1.8,1.6) {$a$};
  \node (b) at (-1.8,1.6) {$c$};
  \node (c) at (0,-2.5) {$p$};
  \node (m) at (-0.4,1.4) {$i$};
  \node (n) at (-1.5,-0.5) {$j$};
  \node (k) at (1.5,-0.5) {$k$};
  \node (p) at (1.2,0.7) {$b$};
  \node (q) at (-1.2,0.7) {$d$};
  \node (r) at (0,-0.6) {$q$};
  \end{scope}
  \end{tikzpicture}
  \]
  By the same calculation as Lemma \ref{prop:comultiplication}, we obtain the formula.
\end{proof}

Similarly, in $H_\mathcal{C}^*$, we have
\[
\begin{tikzpicture}[scale=0.5]
  \node (lambda) at (0,-0.4) {$\lambda=\frac{1}{D^2}\sum\limits_{a,b,\mu}\sqrt{d_\mu}$};
  \begin{scope}[xshift=4.8cm]
  \draw[line width=0.5mm] (-1,1)--(-1,-1);
  \draw[line width=0.5mm] (1,1)--(1,-1);
  \draw[line width=0.5mm] (1,0)--(-1,0);
  \draw[line width=0.5mm] (-1,1)--(-1,-1);
  \draw[line width=0.5mm] (1,1)--(1,-1);
  \node (a) at (-1.3,1.3) {$a$};
  \node (b) at (-1.3,-1.3) {$a$};
  \node (c) at (1.3,1.3) {$b$};
  \node (d) at (1.3,-1.3) {$b$};
  \node (i) at (0,0.5) {$\mu$};
  \node (hat) at (0,1.9) {$\wedge$};
  \end{scope}
\end{tikzpicture}
\]
A straightforward calculation leads to the following properties of $\lambda$.
\begin{lemma}
  $\lambda$ is an idempotent in $H_\mathcal{C}^*$ and $*$-invariant.
  It induces an $S$-invariant trace on $H_\mathcal{C}^*$. That is
  $\lambda(yx)=\lambda(xy)=\lambda(x)\lambda(y)$, $\lambda(S(x))=\lambda(x)$ and $\lambda(x^*)=\lambda(x)$ for $x,y\in H_\mathcal{C}$.
\end{lemma}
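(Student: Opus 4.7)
The plan is to verify each asserted property of $\lambda$ by direct computation on the basis $\{\widehat{e^{ab}_{i;cd}}\}$ of $H_\mathcal{C}^*$, paralleling the structure of the proof of the preceding lemma for $\Lambda$. The crucial structural observation is that $\lambda$ is supported only on the ``diagonal'' dual basis vectors $\widehat{e^{aa}_{\mu;bb}}$, i.e.\ $\langle\lambda, e^{ab}_{i;cd}\rangle = \tfrac{\sqrt{d_i}}{D^2}\,\delta_{a,b}\delta_{c,d}$. This support condition is what makes all four properties come out.

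First I would dispatch the easy properties. For $S$-invariance: the formula $S(e^{ab}_{i;cd}) = \tfrac{\sqrt{d_b d_c}}{\sqrt{d_a d_d}}\,e^{dc}_{i;ba}$ gives $\langle\lambda, S(e^{ab}_{i;cd})\rangle = \tfrac{\sqrt{d_bd_c}}{\sqrt{d_ad_d}}\cdot\tfrac{\sqrt{d_i}}{D^2}\delta_{d,c}\delta_{b,a}$; the delta functions force $a=b$ and $c=d$, under which the coefficient collapses to $1$, matching $\langle\lambda, e^{ab}_{i;cd}\rangle$. For $*$-invariance, combine $\langle\phi^*,h\rangle=\overline{\langle\phi,S(h)^*\rangle}$ with $(e^{ab}_{i;cd})^*=e^{cd}_{i;ab}$; a parallel delta-function chase shows $\langle\lambda^*,e^{ab}_{i;cd}\rangle=\langle\lambda,e^{ab}_{i;cd}\rangle$ since all quantum-dimension factors are real positive and cancel in the diagonal case. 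The trace statement is then an unpacking of these: using the multiplication rule $e^{ab}_{i;cd}\cdot e^{a'b'}_{i';c'd'} = \tfrac{\delta_{c,a'}\delta_{d,b'}\delta_{i,i'}}{\sqrt{d_i}}\,e^{ab}_{i;c'd'}$, both $\lambda(xy)$ and $\lambda(yx)$ reduce to the same expression after imposing the diagonal support constraint of $\lambda$ on both sides (the cyclic matching of labels lines up with the $\delta_{a,b}$ and $\delta_{c,d}$ requirements).

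The main obstacle is idempotency $\lambda\cdot\lambda=\lambda$ in $H_\mathcal{C}^*$. Here I would use Lemma~\ref{prop:comultiplication} to expand
\[
\Delta(e^{ab}_{i;cd}) = \sum_{j,k,p,q}\tfrac{\sqrt{d_id_jd_k}}{\sqrt{d_ad_bd_cd_d}}\,\bigl\langle G^{ikj}_{p,a,b}\bigr\rangle\,\bigl\langle G^{ijk}_{q,d,c}\bigr\rangle\; e^{ap}_{j;cq}\otimes e^{pb}_{k;qd},
\]
then pair with $\lambda\otimes\lambda$. The diagonal support of $\lambda$ kills every term except those with $a=b$, $c=d$, $p=a$, $q=c$, reducing the idempotent identity $\lambda\cdot\lambda(e^{ab}_{i;cd})=\lambda(e^{ab}_{i;cd})$ to verifying the $6j$-sum identity
\[
\sum_{j,k}d_jd_k\,\bigl\langle G^{ikj}_{a,a,a}\bigr\rangle\,\bigl\langle G^{ijk}_{c,c,c}\bigr\rangle \;=\; D^2\,d_a d_c.
\]

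This is the crux of the proof. I expect to establish it by first applying tetrahedral symmetry to bring the two $G$-symbols into a form amenable to Lemma~1's bubble-sum identity, and then using the completeness relation $\sum_k d_k^2 = D^2$ together with the theta-normalization of the symmetric $6j$-symbols. In effect the sum decouples into an ``$a$-side'' and a ``$c$-side'' bubble evaluation, each contributing a factor of $D^2$ divided and multiplied appropriately, after which the required scalar $D^2 d_a d_c$ drops out. This mirrors (in dual form) the computation of $\Delta^2(\Lambda)$ in the previous lemma, so no genuinely new input is needed beyond the $6j$-symbol machinery already established.
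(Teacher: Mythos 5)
The paper gives no proof of this lemma (it is introduced only with ``a straightforward calculation leads to\ldots''), so the only question is whether your computation actually closes, and at the trace property it does not. Your own support condition $\langle\lambda,e^{ab}_{i;cd}\rangle=\frac{\sqrt{d_i}}{D^2}\delta_{a,b}\delta_{c,d}$, combined with the multiplication rule $e^{ab}_{i;cd}\cdot e^{a'b'}_{i;c'd'}=\frac{\delta_{c,a'}\delta_{d,b'}}{\sqrt{d_i}}\,e^{ab}_{i;c'd'}$, gives
\[
\lambda\bigl(e^{ab}_{i;cd}\,e^{a'b'}_{i;c'd'}\bigr)=\tfrac{1}{D^2}\,\delta_{c,a'}\delta_{d,b'}\,\delta_{a,b}\,\delta_{c',d'},
\qquad
\lambda\bigl(e^{a'b'}_{i;c'd'}\,e^{ab}_{i;cd}\bigr)=\tfrac{1}{D^2}\,\delta_{c',a}\delta_{d',b}\,\delta_{a',b'}\,\delta_{c,d},
\]
and these are \emph{not} the same function of the indices, so the ``cyclic matching of labels'' you invoke does not line up. Concretely, in the Fibonacci category take $x=e^{\tau\tau}_{\tau;\mathbf{1}\tau}$ and $y=e^{\mathbf{1}\tau}_{\tau;\tau\tau}$ (both appear in the paper's explicit basis of $H_\mathcal{F}$): then $xy=\phi^{-1/2}e^{\tau\tau}_{\tau;\tau\tau}$ gives $\lambda(xy)=1/D^2$, while $yx=\phi^{-1/2}e^{\mathbf{1}\tau}_{\tau;\mathbf{1}\tau}$ gives $\lambda(yx)=0$. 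The structural reason: under $H_\mathcal{C}\cong\bigoplus_i End(V_i)$ the element $e^{ab}_{i;cd}$ is (up to $\sqrt{d_i}$) the matrix unit with row index $(a,b)$ and column index $(c,d)$, so a tracial functional must be supported on $\delta_{a,c}\delta_{b,d}$; the support you read off the figure, $\delta_{a,b}\delta_{c,d}$, pairs top with bottom on each side and is a rank-one functional on each block, which cannot be a trace. So either $\lambda$ must be read with the other index pattern, $\lambda\propto\sum_{a,b,\mu}\sqrt{d_\mu}\,\widehat{e^{ab}_{\mu;ab}}$ (matching the patterns of $\eta$ and $\chi$) --- in which case traciality does follow by your delta chase, but your stated support condition and hence your $S$-invariance, $*$-invariance and idempotency computations all have to be redone with $\delta_{a,c}\delta_{b,d}$ --- or the traciality claim fails. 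You cannot keep both the support condition you assert and the conclusion you are proving; this inconsistency has to be resolved before anything else in the argument can be trusted.

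A second gap: you never actually address $\lambda(xy)=\lambda(x)\lambda(y)$; ``unpacking'' the deltas does not give it. With either reading of the support, $\lambda(x)\lambda(y)$ carries a factor $1/D^4$ and a different delta pattern from $\lambda(xy)$: for instance $x=y=e^{ab}_{i;ab}$ yields $\lambda(x)^2=d_i/D^4$ but $\lambda(x^2)=1/D^2$, which agree only if $d_i=D^2$. Multiplicativity is a character-type property, not a consequence of traciality, and it needs its own argument or a reinterpretation (most plausibly it is meant to encode idempotency of $\lambda$ in $H_\mathcal{C}^*$, i.e.\ $\lambda(x_{(1)})\lambda(x_{(2)})=\lambda(x)$, which is exactly the identity your $6j$-sum targets). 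Your reduction of idempotency to a bubble-sum identity of the form $\sum_{j,k}d_jd_k\,G\cdot G=D^2d_ad_c$ is the right shape and parallels the $\Delta^2(\Lambda)$ computation, but as written the proposal proves the easy clauses and asserts, incorrectly or without justification, precisely the clauses where the lemma has content.
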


Now we define the vertex operator $A^K_\mathbf{v}:=A_\Lambda(\mathbf{v},\mathbf{p})$ and plaquette operator $B^K_\mathbf{p}:=B_\lambda(\mathbf{v},\mathbf{p})$ for $H_\mathcal{H}$. Both of them are Hermitian. This can be verified by checking that
$(L^h_{\pm})^\dag=L^{h^*}_{\pm}$ and $(T^\phi_{\pm})^\dag=T^{\phi^*}_{\pm}$ for $h\in H_\mathcal{C}$ and $\phi\in H_\mathcal{C}^*$.
For $L^h_{\pm}$,
\begin{eqnarray*}
  (x,L^h_{+}(y))&=&\langle\chi,x^*hy\rangle=\langle\chi,(h^*x)^*y\rangle=(L^{h^*}_{+}x,y)\\
  (x,L^h_{-}(y))&=&\langle\chi,x^*yS(h)\rangle=\langle\chi,S^{-1}(h)x^*y\rangle=\langle\chi,(xS(h))^*y\rangle=(L^{h^*}_{-}x,y)
\end{eqnarray*}
Here we have used that $\chi(xy)=\chi(yS^2(x))$ and $S(S(x^*)^*)=x$ for $x,y\in H_\mathcal{C}$.
Similarly, one can find the adjoints for $T^\phi_{\pm}$. Hence both $A^K_\mathbf{v}$ and $B^K_\mathbf{p}$ are Hermitian since $\Lambda$ and $\lambda$ are $*$-invariant. Applying Proposition \ref{prop:ABoperators}, we obtain a frustration-free Hamiltonian.

\begin{prop}
  Given a lattice $\Gamma$ on an oriented closed surface $\Sigma$ and a unitary fusion category $\mathcal{C}$,
  $$\mathcal{H}^K=-\sum\limits_\mathbf{v}A^K_\mathbf{v}-\sum\limits_\mathbf{p}B^K_\mathbf{p}$$
  is a frustration-free unitary Hamiltonian for the Kitaev model based on $H_\mathcal{C}$.
\end{prop}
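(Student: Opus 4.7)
The plan is to verify three properties that together constitute frustration-freeness of a unitary (Hermitian) Hamiltonian built from commuting projectors: (i) each local term is Hermitian, (ii) every pair of local terms commutes, and (iii) the local terms are projectors, so the ground space is the common $+1$ eigenspace and is nonempty, with each $A^K_{\mathbf{v}}$ and $B^K_{\mathbf{p}}$ individually attaining its maximal eigenvalue on it.

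Step (i) is essentially handed to us by the computation immediately preceding the statement: one has $(L^h_{\pm})^\dagger = L^{h^*}_{\pm}$ and analogously $(T^\phi_{\pm})^\dagger = T^{\phi^*}_{\pm}$. Since $\Delta$ is a $*$-homomorphism and the lemmas just above establish $\Lambda^* = \Lambda$ and $\lambda^* = \lambda$, the coproducts used to define $A^K_{\mathbf{v}} = \sum L^{\Lambda_{(1)}}_+ \otimes L^{\Lambda_{(2)}}_-$ and $B^K_{\mathbf{p}} = \sum T^{\lambda_{(1)}}_- \otimes T^{\lambda_{(2)}}_-$ are setwise invariant under $h \mapsto h^*$ (and $\phi \mapsto \phi^*$), so each term is its own adjoint.

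Step (ii) splits into three subcases. The mixed case $A^K_{\mathbf{v}} B^K_{\mathbf{p}} = B^K_{\mathbf{p}} A^K_{\mathbf{v}}$, for any vertex $\mathbf{v}$ and plaquette $\mathbf{p}$, is exactly Proposition \ref{prop:ABoperators} applied to $\Lambda \in \mathrm{Cocom}(H_\mathcal{C})$ (which is the content of the first lemma of this section) and $\lambda \in \mathrm{Cocom}(H_\mathcal{C}^*)$. The latter is the trace identity $\lambda(xy) = \lambda(yx)$, which is precisely the definition of cocommutativity for an element of the dual. Two vertex operators $A^K_{\mathbf{v}}, A^K_{\mathbf{v}'}$ either act on disjoint sets of edges (trivially commuting) or share an edge, in which case one reduces the commutator to the multiplicative behavior of $L^\Lambda_+ L^\Lambda_- = L^\Lambda_- L^\Lambda_+$ combined with $\Delta^{op}(\Lambda) = \Delta(\Lambda)$ to move the factors through; the analogous argument for $B^K_{\mathbf{p}}, B^K_{\mathbf{p}'}$ uses cocommutativity of $\lambda$ and the coassociativity gymnastics afforded by the formula for $\Delta^2(\Lambda)$ (mirrored for $\lambda$).

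Step (iii) uses $\Lambda^2 = \Lambda$ and $\lambda^2 = \lambda$: the map $h \mapsto A_h(\mathbf{v})$ is an algebra map on $\mathrm{Cocom}(H_\mathcal{C})$ (since on cocommutative elements the multiplication relation in the quantum double reduces to $[\varepsilon \otimes hh'] = [\varepsilon \otimes h][\varepsilon \otimes h']$), so $(A^K_{\mathbf{v}})^2 = A^K_{\mathbf{v}}$, and identically $(B^K_{\mathbf{p}})^2 = B^K_{\mathbf{p}}$. Being Hermitian idempotents, each has spectrum $\{0,1\}$, so $-\sum_{\mathbf{v}} A^K_{\mathbf{v}} - \sum_{\mathbf{p}} B^K_{\mathbf{p}}$ is bounded below and frustration-freeness amounts to exhibiting a nonzero vector lying in the intersection of all $+1$ eigenspaces; one may take, for example, the image of the product of all $A^K_{\mathbf{v}}$ and $B^K_{\mathbf{p}}$ applied to $\eta^{\otimes E}$, which is nonzero by a local argument (each factor of $\Lambda$ contains the strictly positive summand indexed by $a=b=\mathbf{1}$, $i=\mathbf{1}$).

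The main obstacle I anticipate is the vertex–vertex and plaquette–plaquette commutativity, because Proposition \ref{prop:ABoperators} only addresses the $A$–$B$ commutation at a single site; the $A$–$A$ and $B$–$B$ cases require using the cocommutativity and $*$-invariance of $\Lambda, \lambda$ together with the explicit $\Delta^2(\Lambda)$ formula to cycle coproducts past each other across a shared edge.
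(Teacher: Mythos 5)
Your proof is correct and follows essentially the same route as the paper, which obtains the result by combining the Hermiticity of $A^K_{\mathbf{v}}$ and $B^K_{\mathbf{p}}$ (via $(L^h_{\pm})^\dag=L^{h^*}_{\pm}$, $(T^\phi_{\pm})^\dag=T^{\phi^*}_{\pm}$ and the $*$-invariance of $\Lambda,\lambda$) with the idempotency $\Lambda^2=\Lambda$, $\lambda^2=\lambda$ and Proposition~\ref{prop:ABoperators}. Note only that Proposition~\ref{prop:ABoperators} as stated in the paper already asserts that \emph{all} the operators $A_h(\mathbf{v})$, $B_\alpha(\mathbf{p})$ commute pairwise (not merely the $A$--$B$ pair at a single site, which is the earlier corollary), so the vertex--vertex and plaquette--plaquette cases you flag as the main obstacle are covered by the cited result, though your sketch of how to verify them is a reasonable filling-in of its unwritten proof.
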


Because the Hamiltonian is a sum of local commuting projectors, the ground state space $\mathcal{G}^{K}(\Sigma,\Gamma)$ consists of the vector $|\Psi\rangle\in\mathcal{L}^K$ such that $A^K_\mathbf{v}|\Psi\rangle=B^K_\mathbf{p}|\Psi\rangle =|\Psi\rangle$ for all $\mathbf{v}$ and $\mathbf{p}$. In order to describe the ground states of the Kitaev model, we first analyze the image of $A^K_\mathbf{v}$ in $\mathcal{L}^K$. For this, we calculate the action of $A^K_\mathbf{v}$ on a basis vector of $\mathcal{L}^K$.
\[

\]
Note that we have omitted the factors associated to those vertices not on plaquette $\mathbf{p}$
and an overall factor $(\frac{1}{D^2})^N$ where $N$ is the number of vertices.
Because the plaquette operators $B_\mathbf{p}$'s are local linear operators, the dropped factors do not affect the discussion about the action of $B_\mathbf{p}$'s on $\mathcal{L}^K_0$ and the description of the ground states.

In the following, we work on the action of $B^K_{\mathbf{p}}$ on $\mathcal{L}^K_0$. As similar as Levin-Wen model, we write $B^K_{\mathbf{p}}$ as
$$B^K_{\mathbf{p}}=\sum\limits_\mu\frac{d_\mu}{D^2}B^K_{\lambda_\mu}(\mathbf{v},\mathbf{p})$$
where
\[

\]
which coincides with the right hand side of \eqref{eqn:Kitaevplaqutte} in Propostion \ref{prop:Kitaevplaqutte}.
\end{proof}
Note that the formula in proposition 10 is for one particular pattern of orientation.
If we reverse the orientation of an edge $e$, then the map $x_e\mapsto S(x_e)$
is compatible with the actions $L^h_{\pm}$ and $T^\phi_{\pm}$.
So all models with different patterns of orientation are equivariant and their ground states are independent of
the orientation of edges. For example, if $L^\Lambda_{-}(x_e)=x_e$, then $S^2(\Lambda)S(x_e)=S(x_e)$ and so $L^\Lambda_{+}(S(x_e))=S(x_e)$ for $\Lambda$ is $S$-invariant.

Let $\Theta:\mathcal{L}^K_0\rightarrow\mathcal{L}^{LW}_0$ given by
\[
\begin{tikzpicture}[scale=0.5]
  \node (theta) at (0,0) {$\Theta:$};
  \begin{scope}[xshift=5cm]
    \node (<) at (-4,0) {$\Bigg|$};
\node (>) at (4.7,0) {$\Bigg\rangle_{K}$};
\node (p) at (0,0) {$\mathbf{p}$};
  \begin{scope}[rotate=120]             
  \begin{scope}[xshift=2cm]
    \draw[line width=0.5mm] (0,0)--(1.5,0);
    \draw[<-,line width=0.7mm] (0.5,0)--(0.6,0);
    \node (k) at (1,0.7) {$k_1$};
    \begin{scope}[rotate=-120]
      \draw[line width=0.5mm] (0,0)--(2,0);
      \draw[->,line width=0.7mm] (1,0)--(1.1,0);
      \node (i) at (1,0.7) {$i_1$};
    \end{scope}
  \end{scope}
  \end{scope}
  \begin{scope}[rotate=180]             
  \begin{scope}[xshift=2cm]
    \draw[line width=0.5mm] (0,0)--(1.5,0);
    \draw[<-,line width=0.7mm] (0.5,0)--(0.6,0);
    \node (k) at (1,0.7) {$k_2$};
    \begin{scope}[rotate=-120]
      \draw[line width=0.5mm] (0,0)--(2,0);
      \draw[->,line width=0.7mm] (1,0)--(1.1,0);
      \node (i) at (1,0.7) {$i_2$};
    \end{scope}
  \end{scope}
  \end{scope}
  \begin{scope}[rotate=240]             
  \begin{scope}[xshift=2cm]
    \draw[line width=0.5mm] (0,0)--(1.5,0);
    \draw[<-,line width=0.7mm] (0.5,0)--(0.6,0);
    \node (k) at (1,0.8) {$k_3$};
    \begin{scope}[rotate=-120]
      \draw[line width=0.5mm] (0,0)--(2,0);
      \draw[->,line width=0.7mm] (1,0)--(1.1,0);
      \node (i) at (1,0.7) {$i_3$};
    \end{scope}
  \end{scope}
  \end{scope}
  \begin{scope}[rotate=300]             
  \begin{scope}[xshift=2cm]
    \draw[line width=0.5mm] (0,0)--(1.5,0);
    \draw[<-,line width=0.7mm] (0.5,0)--(0.6,0);
    \node (k) at (1,0.8) {$k_4$};
    \begin{scope}[rotate=-120]
      \draw[line width=0.5mm] (0,0)--(2,0);
      \draw[->,line width=0.7mm] (1,0)--(1.1,0);
      \node (i) at (1,0.7) {$i_4$};
    \end{scope}
  \end{scope}
  \end{scope}
  \begin{scope}[rotate=0]             
  \begin{scope}[xshift=2cm]
    \draw[line width=0.5mm] (0,0)--(1.5,0);
    \draw[<-,line width=0.7mm] (0.5,0)--(0.6,0);
    \node (k) at (1,0.7) {$k_5$};
    \begin{scope}[rotate=-120]
      \draw[line width=0.5mm] (0,0)--(2,0);
      \draw[->,line width=0.7mm] (1,0)--(1.1,0);
      \node (i) at (1,0.7) {$i_5$};
    \end{scope}
  \end{scope}
  \end{scope}
  \begin{scope}[rotate=60]             
  \begin{scope}[xshift=2cm]
    \draw[line width=0.5mm] (0,0)--(1.5,0);
    \draw[<-,line width=0.7mm] (0.5,0)--(0.6,0);
    \node (k) at (1,0.8) {$k_6$};
    \begin{scope}[rotate=-120]
      \draw[line width=0.5mm] (0,0)--(2,0);
      \draw[->,line width=0.7mm] (1,0)--(1.1,0);
      \node (i) at (1,0.7) {$i_6$};
    \end{scope}
  \end{scope}
  \end{scope}
  \end{scope}
\node (map) at (11.5,0) {$\mapsto$};
\begin{scope}[xshift=17cm]
  \node (<) at (-4,0) {$\Bigg|$};
\node (>) at (4.7,0) {$\Bigg\rangle_{LW}$};
\node (p) at (0,0) {$\mathbf{p}$};
  \begin{scope}[rotate=120]             
  \begin{scope}[xshift=2cm]
    \draw[line width=0.5mm] (0,0)--(1.5,0);
    \node (k) at (1,0.4) {$k_1$};
    \begin{scope}[rotate=-120]
      \draw[line width=0.5mm] (0,0)--(2,0);
      \node (i) at (1,0.5) {$i_1$};
    \end{scope}
  \end{scope}
  \end{scope}
  \begin{scope}[rotate=180]             
  \begin{scope}[xshift=2cm]
    \draw[line width=0.5mm] (0,0)--(1.5,0);
    \node (k) at (1,0.5) {$k_2$};
    \begin{scope}[rotate=-120]
      \draw[line width=0.5mm] (0,0)--(2,0);
      \node (i) at (1,0.5) {$i_2$};
    \end{scope}
  \end{scope}
  \end{scope}
  \begin{scope}[rotate=240]             
  \begin{scope}[xshift=2cm]
    \draw[line width=0.5mm] (0,0)--(1.5,0);
    \node (k) at (1,0.5) {$k_3$};
    \begin{scope}[rotate=-120]
      \draw[line width=0.5mm] (0,0)--(2,0);
      \node (i) at (1,0.5) {$i_3$};
    \end{scope}
  \end{scope}
  \end{scope}
  \begin{scope}[rotate=300]             
  \begin{scope}[xshift=2cm]
    \draw[line width=0.5mm] (0,0)--(1.5,0);
    \node (k) at (1,0.5) {$k_4$};
    \begin{scope}[rotate=-120]
      \draw[line width=0.5mm] (0,0)--(2,0);
      \node (i) at (1,0.6) {$i_4$};
    \end{scope}
  \end{scope}
  \end{scope}
  \begin{scope}[rotate=0]             
  \begin{scope}[xshift=2cm]
    \draw[line width=0.5mm] (0,0)--(1.5,0);
    \node (k) at (1,0.5) {$k_5$};
    \begin{scope}[rotate=-120]
      \draw[line width=0.5mm] (0,0)--(2,0);
      \node (i) at (1,0.5) {$i_5$};
    \end{scope}
  \end{scope}
  \end{scope}
  \begin{scope}[rotate=60]             
  \begin{scope}[xshift=2cm]
    \draw[line width=0.5mm] (0,0)--(1.5,0);
    \node (k) at (1,0.6) {$k_6$};
    \begin{scope}[rotate=-120]
      \draw[line width=0.5mm] (0,0)--(2,0);
      \node (i) at (1,0.5) {$i_6$};
    \end{scope}
  \end{scope}
  \end{scope}
\end{scope}
\end{tikzpicture}
\]
Then it is clear that $\Theta$ is bijective and the above proposition implies that
$\Theta\circ B^K_\mathbf{p}=B^{LW}_\mathbf{p}\circ\Theta$ for all plaquette.
Therefore, we know that the ground states of these two models are in 1-1 correspondence.

\begin{theorem}
Given a lattice $\Gamma$ on a closed oriented surface $\Sigma$ and a unitary fusion category $\mathcal{C}$ that is multiplicity free and whose simple objects are self-dual with trivial Frobenius-Schur indicators, the ground state space $\mathcal{G}^{K}(\Sigma,\Gamma)$ of the Kitaev model based on $H_\mathcal{C}$ is canonically isomorphic to the ground state space $\mathcal{G}^{LW}(\Sigma,\Gamma)$ of Levin-Wen Models based on $\mathcal{C}$.
\end{theorem}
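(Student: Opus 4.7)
The plan is to exhibit the canonical isomorphism directly via the map $\Theta$ already introduced just above the statement, and verify that it intertwines the two commuting families of projectors. Concretely, I first observe that since $\mathcal{H}^K$ is a sum of mutually commuting Hermitian projectors (by Proposition \ref{prop:ABoperators} together with the idempotency and $\ast$-invariance of $\Lambda$ and $\lambda$), the ground state space $\mathcal{G}^K(\Sigma,\Gamma)$ is the joint fixed subspace $\bigcap_{\mathbf{v}}\mathrm{Im}(A^K_{\mathbf{v}})\cap\bigcap_{\mathbf{p}}\mathrm{Im}(B^K_{\mathbf{p}})$, and similarly for the Levin-Wen side. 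Thus it suffices to build a bijection $\mathcal{L}^K_0\to\mathcal{L}^{LW}_0$ between the vertex-stabilized subspaces that also commutes with every plaquette operator.

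First I would identify natural bases of $\mathcal{L}^K_0$ and $\mathcal{L}^{LW}_0$. The explicit description of $A^K_{\mathbf{v}}(\mathcal{L}^K)$ given in the paper shows that a basis of $\mathcal{L}^K_0$ is labeled, around each vertex, by admissible triples $(i,j,k)$ matching on shared edges; this is exactly the indexing set of the Levin-Wen basis of $\mathcal{L}^{LW}_0$, which consists of admissible edge-labelings by simple objects of $\mathcal{C}$. I would then define $\Theta$ by sending the Kitaev basis vector labeled $(k_1,\dots,k_6;i_1,\dots,i_6)$ to the Levin-Wen basis vector with the same edge labels, and observe directly that this is a bijection, independently of the choice of edge orientations (by the orientation-reversal compatibility remark following Proposition \ref{prop:Kitaevplaqutte} using $S$-invariance of $\Lambda$).

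The central step is then to check $\Theta\circ B^K_{\mathbf{p}}=B^{LW}_{\mathbf{p}}\circ\Theta$ for every plaquette $\mathbf{p}$. Writing $B^K_{\mathbf{p}}=\sum_\mu\frac{d_\mu}{D^2}B^\mu_{\mathbf{p}}$ and similarly for $B^{LW}_{\mathbf{p}}$, it is enough to match the component operators $B^\mu_{\mathbf{p}}$ on both sides. This is exactly the content of Proposition \ref{prop:Kitaevplaqutte}: the action of $B^\mu_{\mathbf{p}}$ on a Kitaev basis vector of $\mathcal{L}^K_0$ yields coefficients $\prod_{n=1}^6\sqrt{d_{j_n}/(d_{i_n}d_\mu)}$ times symmetric $6j$-symbols, precisely mirroring the Levin-Wen formula derived in Section 3. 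Applying $\Theta$ and comparing term by term with the Levin-Wen $B^\mu_{\mathbf{p}}$ then gives the intertwining identity.

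The main obstacle is bookkeeping: one must ensure that the per-vertex normalization factors $\sqrt{d_b}/\sqrt{d_c}$ coming from the orientation-dependent basis definition of $\mathcal{L}^K_0$ exactly cancel the spurious factors appearing in the coefficient $C_{a,b,c}$, so that $B^K_{\mathbf{p}}$ acts with the same symmetric-$6j$ coefficients as $B^{LW}_{\mathbf{p}}$. This cancellation relies on the specific normalization of $\Lambda$ and $\lambda$ (built from quantum dimensions and $D^2$), the Pentagon identity for $G$-symbols (Lemma 1), and the $S$-invariance that makes the whole construction orientation-independent. Once the intertwining is established, $\Theta$ restricts to a canonical bijective linear map $\mathcal{G}^K(\Sigma,\Gamma)\to\mathcal{G}^{LW}(\Sigma,\Gamma)$, completing the proof; combining with the canonical identification of $\mathcal{G}^{LW}(\Sigma,\Gamma)$ with $Z_{TV}(\Sigma)$ from \cite{Kir} then yields the promised corollary.
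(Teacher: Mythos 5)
Your proposal is correct and follows essentially the same route as the paper: the paper likewise defines $\Theta$ on the vertex-stabilized subspaces by matching edge labels, invokes Proposition \ref{prop:Kitaevplaqutte} to get $\Theta\circ B^{K}_{\mathbf{p}}=B^{LW}_{\mathbf{p}}\circ\Theta$, and concludes that the ground state spaces are in canonical bijection. The bookkeeping cancellation you flag is exactly what the paper's proof of Proposition \ref{prop:Kitaevplaqutte} carries out via the pentagon identity for symmetric $6j$-symbols.
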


Combining with the result in \cite{Kir}, one has $\mathcal{G}^{K}(\Sigma,\Gamma)$ is canonically isomorphic to the target space $Z_{TV}(\Sigma)$ of the TV-TQFT based on $\mathcal{C}$. This implies that the ground state does not depend on the choice of trivalent lattice and is a topological invariant of $\Sigma$.

The self-duality and multiplicity free assumptions for $\mathcal{C}$ are not essential obstacles to establish a general model. Their main use is to save indices and arrows when doing graphical calculus. The assumption of trivial Frobenius-Schur indicators for $\mathcal{C}$ is subtle. It is not needed to write down a frustration free Hamiltonian based on $H_\mathcal{C}$ for a general unitary fusion category $\mathcal{C}$. However, in order to show the new models have the same ground states as LW models, we need to impose the trivial FS indicators assumption to deal with the identities evolving $6j$ symbols with certain symmetry. It is expected to drop this assumption by more careful discussion in the future.

\end{document}